\documentclass[12pt]{iopart}

\expandafter\let\csname equation*\endcsname\relax
\expandafter\let\csname endequation*\endcsname\relax
\usepackage{amsmath}

\usepackage{amsfonts,amsthm}
\usepackage{multirow}
\usepackage{tikz,pgfplots}
\usepackage{pdfpages}
\usepackage{diagbox}
\usepackage{amsbsy}
\usepackage{amssymb}   
\usepackage{algorithm}
\usepackage[noend]{algpseudocode}
\usepackage{graphicx}  
\usepackage{pgf,tikz}
\usetikzlibrary{mindmap,trees,shapes,arrows,matrix}
\usepackage{verbatim}   
\usepackage{color}  
\usepackage{cite}    
\usepackage{subfigure}  
\usepackage{hyperref}
\hypersetup{
     colorlinks = true,
     linkcolor = green!50!blue,
     anchorcolor = blue,
     citecolor = blue,
     filecolor = blue,
     urlcolor = blue
     }

\usepackage{color}
\usepackage{booktabs}
\usepackage[margin=1in]{geometry}
\usepackage{array}
\usepackage{makecell}
\newcolumntype{x}[1]{>{\centering\arraybackslash}p{#1}}
\usepgfplotslibrary{fillbetween}

\newtheorem{prop}{Proposition}

\definecolor{observations}{rgb}{0.84,0.23,0.24}
\definecolor{titlecolor}{rgb}{0.08,0.38,0.74}
\newcommand{\bvec}[1]{\mathbf{#1}}
\newcommand{\bvecS}[1]{\ensuremath{\boldsymbol{#1}}}
\newcommand{\M}[1]{\mathbf{#1}}
\newcommand{\Mm}[1]{\ensuremath{\boldsymbol{#1}}}
\newcommand{\hF}[1]{\hat{F}}

\newcommand{\noise}{\sigma^{-2}}
\newcommand{\calH}{\mathcal{H}}

\newcommand{\Gpos}{\Mm{\Gamma}_{\!\mathrm{post}}}

\newcommand{\Gnoise}{\Mm{\Gamma}_{\!\mathrm{noise}}}
\newcommand{\Gnoiseinv}{\Mm{\Gamma}^{-1}_{\!\mathrm{noise}}}
\newcommand{\Gpr}{\Mm{\Gamma}_{\!\mathrm{pr}}}
\newcommand{\Gprsq}{\Mm{\Gamma}^2_{\!\mathrm{pr}}}
\newcommand{\Gprsqrt}{\Mm{\Gamma}^{\frac{1}{2}}_{\!\mathrm{pr}}}

\newcommand{\Cpr}{\mathcal{{C}}_{\!\mathrm{pr}}}
\newcommand{\Gprinv}{\Mm{\Gamma}^{-1}_{\!\mathrm{pr}}}
\newcommand{\mupr}{m_{\!\mathrm{pr}}}
\newcommand{\mupost}{m_{\!\mathrm{post}}}
\newcommand{\mpost}{\bvec{m}_{\!\mathrm{post}}}
\newcommand{\mpr}{\bvec{m}_{\!\mathrm{pr}}}

\newcommand{\Cpost}{\mathcal{C}_{\!\mathrm{post}}}
\newcommand{\R}{\mathbb{R}}
\newcommand{\Ns}{s}
\newcommand{\Nt}{r}

\newcommand{\phiN}{\phi_{\text{N}}}

\def\addressnyu{Courant Institute of Mathematical Sciences, New York University,
New York, NY, USA}
\def\addressncsu{Department of Mathematics, North Carolina State University, Raleigh, NC, 
USA}

\begin{document}
%\tableofcontents

\title[OED under irreducible uncertainty]{Optimal experimental design
  under irreducible uncertainty for linear inverse problems governed by PDEs}
%\date{\today}

%\submitto{\IP}

\author{Karina~Koval$^1$, Alen~Alexanderian$^2$ and Georg~Stadler$^1$}

\address{
$^1$\addressnyu\\
$^2$\addressncsu
}
\ead{\url{koval@cims.nyu.edu}, 
  \url{alexanderian@ncsu.edu},
  \url{stadler@cims.nyu.edu}}

\begin{abstract}
  We present a method for computing A-optimal sensor placements for
  infinite-dimensional Bayesian linear inverse problems governed by
  PDEs with irreducible model uncertainties. Here, irreducible
  uncertainties refers to uncertainties in the model that exist
  in addition to the parameters in the inverse problem, and that cannot be
  reduced through observations.  Specifically, given a statistical
  distribution for the model uncertainties, we compute the optimal
  design that minimizes the expected value of the posterior covariance
  trace. The expected value is discretized using Monte Carlo leading
  to an objective function consisting of a sum of trace operators and
  a binary-inducing penalty. Minimization of this objective requires
  a large number of PDE solves in each step. To make this problem computationally
  tractable, we construct a composite low-rank basis using a
  randomized range finder algorithm to eliminate forward and adjoint
  PDE solves. We also present a novel formulation  of 
  the A-optimal design objective that requires the trace of
  an operator in the observation rather than the parameter space. The
  binary structure is enforced using a weighted regularized
  $\ell_0$-sparsification approach. We present numerical results for
  inference of the initial condition in a subsurface flow problem with
  inherent uncertainty in the flow fields and in the initial times.
\end{abstract}

%\maketitle

\noindent{\it Keywords\/}: Optimal design, inverse problems, model
uncertainty, optimization under uncertainty, model reduction,
subsurface flow.

\section{Introduction}\label{sec:intro}
Many problems in the sciences and engineering require inference of
unknown/uncertain parameters from indirect observations and a mathematical
model that relates the parameters to these observations. Having access to informative data
is integral to accurate parameter inference.  However, the cost of data
collection or physical restrictions often limit how much data one can collect.
Even if one has access to large stores of data, processing large amounts of
it can be expensive; moreover, data can be full of redundancies---poor
experimental design choices can limit the information data contains about the
parameters. A natural question to ask is: how can we design experimental
conditions for data collection to optimally reconstruct/infer parameters of
interest?  Addressing this requires solving an optimal experimental design
(OED) problem.

Inverse problems stemming from real-world applications often contain
uncertainties in the governing model, in addition to the uncertain inversion
parameters.  We focus on \emph{irreducible} model uncertainties, i.e.,
uncertainties that---for all practical purposes---cannot be reduced via 
parameter estimation.  
Examples of such irreducible uncertainties arise in inversion of the initial
concentration of a contaminant in groundwater flow. In this problem, one
usually only has rough estimates of the true groundwater velocity
field. Additionally, one might not exactly know the exact time at
which the contaminant has been released.
Unlike uncertainties that could be reduced with observations and parameter
estimation, these uncertainties are difficult to reduce.
However, they should be taken into account when computing
experimental designs. This article is about design of experiments for inverse
problems governed by models containing such irreducible uncertainties.

We focus on infinite-dimensional Bayesian inverse problems in which the
parameter-to-observable map is linear; see section~\ref{sec:background}
for a brief overview. The irreducible uncertainty may enter
nonlinearly into the model and we assume some knowledge about its
distribution. This knowledge could be
based, for instance, on historic data, or it could be the posterior
distribution obtained as the solution of another Bayesian inverse problem.  To
accommodate such an irreducible additional uncertainty in OED, we extend the
notion of A-optimal design to A-optimal design under (irreducible) uncertainty,
which we define as the design that minimizes the expected value of the average
posterior variance; see section~\ref{sec:a-optimal}. 

In the present work, we restrict the idea of experimental design to that
of choosing locations for placing data collecting sensors, though our
approach can be adapted to more general experimental design problems. 
We formulate the OED problem as that of finding an optimal subset of locations for sensor placement
from a pre-specified network of candidate sensor locations. 
We assign to
each candidate location a non-negative \emph{design weight} indicating its importance.  
We seek binary
designs and interpret a candidate sensor location with a design weight of $1$
as a location where a sensor should be placed.  To circumvent the computational
challenges of binary optimization, we relax the binary condition and add a
sparsity-inducing (or binary-inducing) penalty to our optimal design objective; 
see section~\ref{subsec:oedProb}.

% ***************************
\textbf{Literature survey and challenges.}
Classical references for optimal experimental design problems include~\cite{AtkinsonDonev92,
Ucinski05,Pazman86,Pukelsheim93}.   
OED for inverse problems governed by computationally intensive models 
has been subject to intense research activity in the past couple of decades;
see, e.g.,~\cite{
KorkelKostinaBockEtAl04,
HaberHoreshTenorio10,HoreshHaberTenorio10,
HuanMarzouk13,LongScavinoTemponeEtAl13,
AlexanderianPetraStadlerEtAl16,
yu2018scalable,
ruthotto2018optimal,
attia2018goal}.
Our focus is on ill-posed infinite-dimensional Bayesian \emph{linear} inverse problems. 
We build on previous work~\cite{HaberHoreshTenorio08,HaberMagnantLuceroEtAl12,
alexanderian:oed}, which developed
efficient methods for computing A-optimal designs for
high- or infinite-dimensional linear inverse problems using either a Bayesian or a
frequentist approach. Other approaches for computing optimal experimental
designs for high/infinite-dimensional linear inverse problems
are explored, for example, in
\cite{neitzel_sparse,AlexanderianSaibaba18,alexanderian:Doptimal}. 
In \cite{neitzel_sparse}, the authors propose a measure-based
OED formulation that does not choose sensor locations from a finite
number of candidate locations but allows sensors to be placed 
anywhere on a closed subset of the domain.
The articles 
\cite{alexanderian:Doptimal,AlexanderianSaibaba18} explore an alternate OED
criterion---D-optimality---for infinite-dimensional Bayesian linear inverse problems.

We extend previous approaches by taking into account irreducible uncertainty in
the model. This results in a challenging optimization under uncertainty (OUU)
problem~\cite{Sahinidis04,BeyerSendhoff2007,kouri2018optimization}, as we now
describe. In linear inverse problems, the average posterior variance of the
inversion parameters---the A-optimal criterion---is defined by the trace
of the posterior covariance operator $\Cpost$. This operator is high-dimensional 
(upon discretization),
dense, and expensive-to-apply.  Specifically, applying $\Cpost$ to
vectors requires many PDE solves. This covariance operator is a function
of the vector of experimental design weights and the random variables
characterizing the model uncertainty.  The resulting OED under uncertainty
(OEDUU) problem is challenging, because the OEDUU
objective is the expected value of $\tr \left[ \Cpost \right]$, with expectation taken with
respect to the model uncertainty.  It is worth noting that optimizing  
$\tr \left[ \Cpost \right]$ even for a single instance of $\Cpost$ is in itself a challenging 
problem~\cite{alexanderian:oed}. 

As mentioned above, our goal is finding binary design vectors, which are in
general difficult to compute.  Our approach for this considers a
relaxation of the problem and uses an adaptation of the
regularized $\ell_0$-sparsification approach outlined
in~\cite{alexanderian:oed}.  Note that there are different
options to control sparsity of designs; see e.g.,~\cite{candes2008enhancing,
HaberMagnantLuceroEtAl12,yu2018scalable}.

\textbf{Our approach.}
We follow a sample average approximation (SAA) approach for the OEDUU problem, 
where we approximate the expectation in the OEDUU objective by sample averaging over the
model uncertainty. Corresponding to each realization of the irreducible model
uncertainty, we have a realization of the parameter-to-observable map (forward operator) that
defines a specific instance of $\Cpost$; see section~\ref{subsec:expected_valDisc}.
Computing traces of these operators is challenging 
due to their high-dimensionality (upon discretization) 
and the high cost (in terms of PDE solves) of applying the covariance operators 
to vectors. 

To mitigate the computational cost of OEDUU, we present a novel formulation of the
OED criterion in the observation space. Thus, we only require computing traces
of operators defined on the observation space which in many
infinite-dimensional inverse problems has a smaller dimension than that of the
discretized parameter space; see section~\ref{subsec:Approx_trace}.  However,
computing the resulting OEDUU objective and its gradient still requires many PDE
solves, and these computations are repeated in each step of an optimization
algorithm used for solving the OEDUU problem. Hence, it is imperative to
exploit problem structure to compute low-rank approximations of the forward
operator samples to eliminate frequent PDE solves from the optimization
iterations.  To do so, we employ randomized matrix methods to compute a low
rank basis that \emph{jointly} approximates the range space for all or subsets
(clusters) of forward operator samples; see section~\ref{sec:mod_red}.

We explore the effectiveness and performance of our methods for a
realistic groundwater initial condition inversion problem; see 
sections~\ref{sec:IP}--\ref{sec:numerics}.  In this application, the
(high-dimensional) irreducible uncertainties stem from: an unknown groundwater flow field (as a result of uncertainty in the subsurface permeability field), and from an uncertain observation time.

\textbf{Contributions.}
The contributions of this article are as follows: (1) We propose a mathematical 
formulation for OED under irreducible model uncertainty
in infinite-dimensional Bayesian linear inverse problems. 
(2) We present a novel OED objective
formulation in the observation space that avoids trace estimation in 
high-dimensional discretized parameter spaces. This formulation also
applies to A-optimal OED without additional model uncertainty.
(3) We develop an efficient and practical  reduced order modeling 
framework for OEDUU 
that eliminates PDE solves from the optimization process.
(4) We present a comprehensive set of numerical experiments that illustrate 
the proposed approach and demonstrate its effectiveness. 

\textbf{Limitations.}  The present work also has
limitations. (1) Our formulation is restricted to linear
parameter-to-observation maps, Gaussian priors, and additive Gaussian
noise. Further work is needed to extend it
to nonlinear inverse problems.  One possible extension is to use a
Gaussian approximation of the posterior distribution as in
\cite{AlexanderianPetraStadlerEtAl16}.  (2) We use Monte Carlo
sampling to approximate the irreducible uncertainty. If the aim is a
highly accurate approximation of this uncertainty, a large number of
samples might be needed.

\section{Background}\label{sec:background} In this section, we review
relevant material required for the formulation of OED problems under
uncertainty for infinite-dimensional Bayesian inverse problems. We
also summarize previous work which this paper builds on. 

\subsection{Infinite-dimensional Bayesian linear inverse problems}\label{subsec:bayes_inf}

We begin our discussion by formulating a prototypical Bayesian linear inverse
problem, which is the primary focus of the present work.  A detailed treatment
of general nonlinear Bayesian inverse problems can be found for instance
in~\cite{dashti:bayesian}.
 
Given finite-dimensional observations, $\bvec{d} \in \mathbb{R}^{d}$, we seek
to infer an unknown parameter, $m$, which is related to the data
through
\begin{equation}\label{eq:paramToObs} \centering \bvec{d} =
  \mathcal{F}m + \bvec{\eta}.
\end{equation}
We consider the case where $m$ is an element of an
infinite-dimensional Hilbert space $\mathcal{H}$, e.g.,
$\mathcal{H} = L^2(\mathcal{D})$ with
$\mathcal{D}$ being a bounded domain in $\R^2$ or $\R^3$.  Here,
$\mathcal{F}:\mathcal{H} \rightarrow \mathbb{R}^{d}$ is a continuous linear
parameter-to-observable map (forward model). In our target
applications, computing $\mathcal{F}m$ for a given $m$ involves
solving a partial differential equation (PDE) followed by 
application of an observation operator.  In~\eqref{eq:paramToObs}, we
assume $\bvec{\eta} \sim \mathcal{N}(\bvec{0},\Gnoise)$.

Following a Bayesian approach, we model the parameter $m$ as a random variable
and impose a prior probability law for $m$. The prior law is a
probabilistic description of our prior knowledge about the parameter.  We use a
Gaussian prior $\mathcal{N}(\mupr,\Cpr)$ where the mean
$\mupr$ is a sufficiently regular element of $\mathcal{H}$ and the covariance
operator $\Cpr$ is a trace-class operator defined through the inverse of a
differential operator. More precisely, we let $\Cpr = \mathcal{A}^{-2}$ where
$\mathcal{A} = -\rho \Delta + \delta \mathcal{I}$; here, $\Delta$ is
the Laplace operator, $\delta > 0$
controls the magnitude of the variance and $\rho > 0$ controls the correlation
length.  This choice ensures that the prior covariance operator is trace-class
in two and three space dimensions; see \cite{dashti:bayesian} for details.

The solution of the Bayesian inverse problem is the posterior probability law
for the parameter, which is conditioned on measurement data. 
For a linear inverse problem with a Gaussian prior and an 
additive Gaussian noise model, 
which is what we assume, it is well known~\cite{dashti:bayesian} 
that the posterior distribution is also 
Gaussian, namely
$\mathcal{N}(\mupost,\Cpost)$ with  
\begin{equation}\label{eq:inf_bayes}
\Cpost = \left(\mathcal{F}^*{\Gnoiseinv}\mathcal{F}+\Cpr^{-1} \right)^{-1} \quad 
\text{and} \quad \mupost = \Cpost\left(
\mathcal{F}^*{\Gnoiseinv}\bvec{d}+\Cpr^{-1}\mupr \right).
\end{equation}

\subsection{Bayesian linear inverse problem with model uncertainty}\label{subsec:bayes_uncertain}
We are interested in the design of experiments for inverse problems governed by
PDEs with uncertain parameters representing the irreducible
uncertainty. Note that these
uncertainties are in addition to the uncertainty in the inversion parameter.
In such cases, the forward operator $\mathcal{F}$ is a function of
uncertain parameters. We will formalize this below.

Let $(\Omega,\mathcal{G},P)$ be a probability space, where $\Omega$ is a sample
space, $\mathcal{G}$ is a suitable $\sigma$-algebra on $\Omega$, and $P$ is a
probability measure.  We let $\xi = \xi(\omega)$ denote a random variable,
defined on $(\Omega, \mathcal{G}, P)$, that models the irreducible uncertainty in
$\mathcal{F}$; for example, $\xi$ could be a random vector whose entries define
uncertain parameters in the governing PDEs, or $\xi$ could be a function-valued
random variable (i.e., a random field coefficient).  In this case, the forward
operator is a random variable $\mathcal{F} : \Omega \rightarrow
\mathcal{L}(\mathcal{H}, \R^{d})$, where $\mathcal{L}(\mathcal{H},\R^{d})$ is
the space of linear transformations from $\mathcal{H}$ to $\R^d$.
For each $\omega \in \Omega$, we have a realization $\mathcal{F}(\omega)
= \mathcal{F}(\xi(\omega))$ of the forward model.
Thus, the solution of the corresponding 
Bayesian linear inverse problem depends on $\xi$ and is 
given by 
$\mathcal{N}(\mupost(\xi),\mathcal\Cpost(\xi))$, with 
\begin{equation}\label{eq:inf_bayesU}
\Cpost(\xi) = \left(
\mathcal{F}(\xi)^*{\Gnoiseinv}\mathcal{F}(\xi)+\Cpr^{-1} \right)^{-1}
\quad \text{and}
\quad \mupost(\xi) = \Cpost\left(
\mathcal{F}(\xi)^*{\Gnoiseinv}\bvec{d}+\Cpr^{-1}\mupr\right).
\end{equation}

\subsection{A-optimal design of infinite-dimensional inverse problems}
We focus on A-optimal design of Bayesian linear inverse problems. That is,
we seek designs (sensor placements) that result in minimized average posterior
variance.  In the present infinite-dimensional formulation, this amounts to
minimizing the trace of $\Cpost$; see \cite{alexanderian:oed} for details. 
The additional challenge for inverse problems with uncertainties in 
the governing model is that the covariance operator itself depends
on the random variable $\xi$ that defines the uncertain parameters in 
the model. In the next section, we formulate the A-optimal design problem
as that of optimization under uncertainty. We refer to this problem 
as optimal experimental design under uncertainty (OEDUU).

\section{A-optimal design of experiments for linear Bayesian inverse problems
with irreducible model uncertainty}\label{sec:a-optimal} 
We begin with a discussion of experimental
design for infinite-dimensional linear inverse problems with uncertain forward models in
section~\ref{subsec:design}. In section~\ref{subsec:AoptimalDesign}, we
present our 
formulation of Bayesian A-optimality for such inverse problems, and in
section~\ref{subsec:oedProb}, we formulate the optimization problem for finding
A-optimal designs in the inverse problems under study. 

\subsection{Design in the Bayesian inverse problem with uncertain forward models}
\label{subsec:design}
      
In OED for inverse problems we are interested in determining how to
collect measurement data to optimize the parameter inference. The definition of
an ``experimental design'' is problem specific. For example, in inverse problems
in tomography, the design could correspond to choosing a subset of angles for
an x-ray source to hit an object. On the other hand, in the inverse problem
of identifying the source of a contaminant, the design corresponds to the  
placement of sensors that are  
used to measure the contaminant concentration. 

We consider a finite generic set of candidate experiments denoted by $x_i \in X$, $i =
1,\ldots,d$, where $X$ is a problem specific set of admissible experiments. More concretely, in the tomography example, $x_i$'s are measurement angles and $X$
corresponds to the set of all possible measurement angles; and in the
subsurface flow example, $x_i$'s indicate sensor locations and $X$ is the
physical domain in which the sensors can be placed.  
We assign a nonnegative weight $w_i$ to each choice
of $x_i$. Thus, an experimental design is specified
by a design vector $\bvec w = (w_1,w_2,\ldots,w_{d})$.
Ideally, we would like binary
design vectors: in the case $w_i = 1$,
we will collect the measurement corresponding to $x_i$, 
and if $w_i = 0$, the corresponding experiment will not be performed
(or the data not be collected).
However, an OED problem of finding binary optimal design vectors
has combinatorial complexity---an extremely challenging problem. 
To cope with this, 
we relax the binary assumption on the weights and allow the weights to
take values in the interval $[0,1]$. 
We then seek to enforce a binary structure
on the computed weights using a suitable penalty method, as discussed 
further below.

To incorporate a generic design $\bvec w$ into the Bayesian inverse problem, we
define a diagonal matrix $\M{W} \in \R^{d \times d}$, which, 
in a generic OED problem, contains the weights $\bvec w = (w_1, w_2, \ldots,
w_{d})$ on the diagonal.  In a sensor placement problems for an inverse
problem governed by a time-dependent PDE, as in the example
used in section~\ref{sec:IP},
the goal is to select an optimal subset of $\Ns$ candidate sensor locations, 
which collect measurements at $\Nt$ observation times. 
When a sensor location is chosen,  we assume that it collects
measurements for all $\Nt$ observation times.
In this setup, the design vector $\bvec{w}$ has dimension $\Ns$, and 
the vector of measurement data has dimension $d = \Ns\Nt.$
Thus, $\M{W} \in \R^{\Ns\Nt \times \Ns\Nt}$ is a block
diagonal matrix with each $\Ns \times \Ns$ diagonal block being a diagonal
matrix with the sensor weights on the diagonal. 

We incorporate the design vector $\bvec w$ in the Bayesian inverse problem
by considering a \emph{weighted forward operator} 
${\mathcal{F}}(\xi;\bvec{w}) := \M{W}^{\frac{1}{2}}\mathcal{F}(\xi)$.  
As before, $\xi$ is the random variable that models uncertainty in the 
governing PDEs.
The posterior law of $m$ now depends on the design 
$\bvec{w}$ as follows: 
\begin{equation}\label{eq:weight_cov}
\begin{aligned}
&\Cpost(\xi,\bvec{w}) = \left(
{\mathcal{F}}^*(\xi)\M{W}^{\frac{1}{2}}{\Gnoiseinv}\M{W}^{\frac{1}{2}}{\mathcal{F}}(\xi)+\Cpr^{-1}
\right)^{-1} \quad \text{and} \\
&\mupost(\xi,\bvec{w}) = \Cpost\left(
\mathcal{F}(\xi)^* \M{W}^{\frac12}\Gnoiseinv\bvec{d}+\Cpr^{-1}\mupr\right).
\end{aligned}
\end{equation}
As discussed later, if the noise covariance $\Gnoiseinv$ is diagonal, 
the expression for the posterior covariance operator simplifies and it
is not necessary to consider the square root of $\M{W}$.

\subsection{A-optimal design under uncertainty}\label{subsec:AoptimalDesign}
We focus on A-optimal design of linear inverse problems. 
Following
\cite{alexanderian:oed}, for a fixed realization of $\xi$, 
the A-optimal design is one which minimizes
the average posterior variance, i.e., the design which minimizes
$\tr{\left[ \Cpost(\xi,\bvec{w})\right]}$. 
We extend the notion of A-optimal designs to Bayesian inverse problems with
uncertain model parameters by formulating the OED problem as that of
minimizing the expected value of $\tr\left[\Cpost(\xi(\omega),\bvec{w})\right]$. 
Thus, the OED criterion we consider is given by 
\begin{equation}\label{eq:expected_val}
\phi(\bvec{w}) := \int_{\Omega} \tr\left[\Cpost(\xi(\omega),\bvec{w})\right] P(d\omega).
\end{equation}
Note that taking the perspective of optimization under uncertainty,
minimizing~\eqref{eq:expected_val} amounts to a risk-neutral
design. Other scalarization methods are possible, e.g., risk-averse
objectives, which place emphasis on avoiding particularly poor designs
\cite{kouri2018optimization, ShapiroDentchevaRuszczynski09}.

\subsection{The OED Problem}\label{subsec:oedProb}
An effective solution method for finding OEDs must 
provide the user a mechanism to 
strike a balance between the competing goals of minimizing 
posterior uncertainty and
using as few sensors as possible. We address this 
by using a sparsifying penalty function
to promote sparse (and eventually, binary) optimal design vectors. Accordingly, we formulate the
optimization problem for finding an OED as follows:
\begin{equation}\label{eq:oed_functional}
\min_{\bvec w \in [0, 1]^\Ns} \phi(\bvec{w})  
+ \gamma \psi \left( \bvec w \right), 
\end{equation}
where $\phi(\bvec{w})$ is the OED criterion defined in~\eqref{eq:expected_val}, 
$\psi: \mathbb{R}_{+}^{\Ns} \rightarrow [0,\infty)$ is a sparsity-inducing penalty
function, and $\gamma$ controls the degree of sparsity. 

A simple choice for $\psi$ is the $\ell_1$-norm, $\psi(\bvec w) =
\bvec 1^T\bvec w$, where $\bvec{1}$ is the vector of all ones. While using an $\ell_1$-norm penalty leads to sparse
designs, it does not yield binary design vectors. To obtain
binary designs, using an $\ell_1$-norm penalty
can be combined with thresholding
to decide where the
sensors should be placed, but as numerically studied in
\cite{alexanderian:oed}, the resulting designs are suboptimal.

In the present work, we follow the regularized 
$\ell_0$-sparsification procedure introduced
in~\cite{alexanderian:oed}, which typically
results in binary optimal design vectors.  
This approach involves 
a continuation procedure in which a sequence of minimization problems
with penalty functions that successively approximate the $\ell_0$-``norm'' 
are solved.  For $\bvec w \in \mathbb{R}^{\Ns}$, we denote the $\ell_0$-``norm'' as $\| \bvec w \|_{\ell_0}$ and define it as the number of non-zero entries in $\bvec w$. 
The procedure is initialized with
$\bvec{w}_{\varepsilon(0)}$, which is obtained by solving
\eqref{eq:oed_functional} with an ${\ell_1}$ penalty. Then, at step $i$
of the continuation procedure, a new solution $\bvec w_{\varepsilon(i)}$ is
obtained by solving 
\begin{equation}\label{eq:sparsification_func} 
\min_{\bvec w \in [0,1]^{\Ns}}
{\phi}(\bvec{w}) + \gamma \psi_{\varepsilon(i)}
(\bvec w) 
\end{equation}
using the previous $\bvec w_{\varepsilon(i-1)}$ as an initial guess; the
penalty function $\psi_{\varepsilon(i)}(\bvec w)$ is chosen from a family of
continuously differentiable penalty functions that approach the
$\ell_0$-``norm'' as $i \rightarrow \infty$.
%see~\cite{alexanderian:oed} for details.
This procedure uses non-convex functions and thus one cannot
guarantee uniqueness of solutions beyond the $\ell_1$-norm
initialization. However, numerical experiments in
\cite{alexanderian:oed} and in section \ref{sec:numerics} show that
the method performs well in practical examples.

We find that using the $\ell_0$ sparsification approach, there  may be
a large disparity between the sparsity (and the values of the
objectives) for the $\ell_1$ solution,
$\bvec{w}_{\varepsilon(0)}$, and for the binary weight vector $\bvec{w}_*$ obtained
at the end of the continuation procedure. That is, for a fixed $\gamma$, we found that $\| \bvec{w}_{\varepsilon(0)} \|_{\ell_1} \ll \| \bvec{w}_* \|_{\ell_0}$. This results in the
continuation procedure being rather sensitive to the choice of the
sequence $\varepsilon(i)$. As a remedy, we use a rescaling to help
mitigate this issue, as follows.  Note we can scale the $\ell_1$ penalty linearly
with a scaling factor $\alpha > 0$ through $\| \alpha \bvec{w} \|_{\ell_1} =
\alpha \| \bvec{w} \|_{\ell_1}$.  However, this scaling has no effect on the
$\ell_0$ norm $\| \alpha \bvec{w} \|_{\ell_0} = \| \bvec{w} \|_{\ell_0}$.
Since the first step in the continuation strategy in \cite{alexanderian:oed}
involves solving the minimization problem with an $\ell_1$ penalty to obtain
$\bvec{w}_{\varepsilon(0)}$, we can control the sparsity of
$\bvec{w}_{\varepsilon(0)}$ by using an $\alpha$-scaled $\ell_1$ penalty. More
specifically, $\alpha \in (0,1)$ produces a less sparse initial guess for the
continuation procedure than $\alpha \ge 1$. Once an initial guess
$\bvec{w}_{\varepsilon(0)}$ is obtained, we use the following family of scaled
sparsity-inducing penalty functions:
\begin{equation}\label{eq:weightedPenalty}
\psi_{\varepsilon}(\alpha \bvec{w}) := \sum_{i=1}^{\Ns} f_{\varepsilon}(\alpha
w_i),
\end{equation} 
where, for $\varepsilon > 0$, 
\begin{equation}\label{eq:sparsity_penalty}
 f_{\varepsilon}(\alpha w) = 
\begin{cases}
\alpha w/\varepsilon \quad &\mbox{if  }\hspace{3mm} w < \frac{\varepsilon}{2\alpha}, \nonumber\\
c_3 {(\alpha w)}^3 + c_2 {(\alpha w)}^2 + c_1 (\alpha w) + c_0 \quad &\mbox{if  }\hspace{3mm} \frac{\varepsilon}{2\alpha} \leq  w < \frac{2\varepsilon}{\alpha}, \nonumber\\
1 \quad &\mbox{if  }\hspace{3mm} w \geq \frac{2\varepsilon}{\alpha}; 
\end{cases}
\end{equation}
as in~\cite{alexanderian:oed}, the constants $c_3,c_2,c_1$ and $c_0$ are chosen such that
$f_{\varepsilon}(\alpha w)$ is continuously differentiable on $[0,1]$. 

To summarize, we choose a decreasing sequence $\{ \varepsilon(i)
\}_{i=1}^{\infty}$ such that $\varepsilon(1) < 1$ and $\varepsilon(i)
\rightarrow 0$ as $i \rightarrow \infty$. For a fixed $\gamma$, we initialize
the procedure with $\bvec{w}_{\varepsilon(0)}$, the solution to
\eqref{eq:sparsification_func} with an $\alpha$-weighted $\ell_1$ penalty, and
in each subsequent step $i$ of the procedure we minimize
\begin{equation}\label{eq:finalOEDfunctional} 
{\phi}(\bvec{w}) + \gamma \psi_{\varepsilon(i)}(\alpha \bvec{w}), 
\end{equation}
using $\bvec{w}_{\varepsilon({i-1})}$ as the initial iterate, until we converge to a
binary solution, $\bvec w_*$.  In practice, the choice of $\alpha$ is
problem-specific and we heuristically choose it such that $ \|
\bvec{w}_{\varepsilon(0)} \|_{\ell_1} \approx \| \bvec{w}_*
\|_{\ell_0}$, where $\bvec{w}_{\varepsilon(0)}$ is the non-binary weight vector obtained using the penalty function $\| \alpha \bvec w \|_{\ell_1}$, and $\bvec w_*$ is the binary weight-vector obtained following the continuation procedure with initial guess $\bvec{w}_{\varepsilon(0)} $. This  might require solving the problem with a few
choices of $\alpha$.

\section{Finite-dimensional approximation of OED objective and its gradient}\label{sec:discrete_oed}
In this section, we describe the discretization of the OED problem. This includes
the discretization of the operators defining 
the OED objective and gradient, and the 
approximation of the expected value in the OED objective in~\eqref{eq:oed_functional}
(see section~\ref{subsec:expected_valDisc}). We also present novel efficient-to-evaluate 
expressions for the OED objective and its gradient by taking traces of
operators on the measurement space (see section~\ref{subsec:Approx_trace}). The latter provides computational 
advantages in cases where the measurement dimension is significantly smaller than the
discretized parameter dimension.

\subsection{The discretized OED problem}\label{subsec:expected_valDisc}
Henceforth, we assume that the forward operator has been discretized
in space and, if applicable, in time.
That is, we consider $\M{F} : (\Omega,\mathcal{G},P) \rightarrow
\mathcal{L}(V_h, \R^d)$ where $V_h$ is a finite-dimensional subspace of $\calH$ given
by the span of $n$ finite-element nodal basis functions  $\{\varphi_i\}_{i=1}^n$. 
For each realization of $\xi$, which parameterizes model uncertainty, 
$\M{F}(\xi)$ is a linear transformation from $V_h$ to the measurement space $\R^d$. 
The discretized inversion
parameter is given by $m_h = \sum_{i=1}^n {m}_i\varphi_i$. Thus, instead of
inferring the probability law for our random function $m$, we focus on
characterizing the posterior distribution for the vector of coefficients,
$\bvec m = ({m}_1 \,\, {m}_2 \, \cdots \,\, {m}_n)^T$. 

The discretized parameter space is $\R^n$ equipped with the so called mass-weighted inner product
that approximates the $L^2(\mathcal{D})$ inner product.
The latter is the Euclidean inner product weighted by the finite element mass matrix;
see~\cite{bui:infBayes} for details.
Thus, the discretized forward
operator is $\M{F}(\xi): \R^{n} \rightarrow \R^{d}$. 
In this setting, 
the adjoint $\M{F}^*(\xi)$
of $\M{F}(\xi)$, is given by $\M{F}^{*}(\xi) = \M{M}^{-1}\M{F}^T(\xi)$ where
$\M{M}$ is the mass matrix. 

In what follows, we assume the observations are corrupted by
uncorrelated Gaussian noise with a constant variance
of $\sigma^2$; that is, $\Gnoise = \sigma^2 \M{I}$. 
The posterior distribution of the discretized parameter $\bvec m$ 
is then given by $\mathcal{N}(\mpost(\xi,\bvec{w}),\Gpos(\xi,\bvec{w}))$ with 
\begin{equation}\label{eq:disc_dist}  
\begin{aligned}
\Gpos(\xi,\bvec{w}) &= \left(\noise\M{F}^*(\xi)\M{W}\M{F}(\xi)+\Gprinv
\right)^{-1} \quad \text{and}
\\
\mpost(\xi,\bvec{w}) &= \Gpos\left(\noise
\M{F}(\xi)^*\M{W}^{\frac{1}{2}}\bvec{d}+\Gprinv \mpr\right). 
\end{aligned}
\end{equation}
Here, $\Gpr$ and $\mpr$ are the discretizations of $\Cpr$ and $\mupr$, respectively.

We follow a sample average approximation (SAA) approach for
solving~\eqref{eq:oed_functional}.  Thus, the expectation in the OED objective
is approximated via sample averaging.  Possibilities include Monte Carlo
(MC) sampling, quasi-Monte Carlo, or quadrature.  In the present work, we rely
on MC.
Let $\xi_i$, $i = 1,\ldots,N$, be realizations of $\xi$, and 
let $\M{F}_i = \M{F}(\xi_i)$. We approximate \eqref{eq:expected_val} with 
\begin{equation}\label{eq:finite_expectation}
\phi(\bvec{w}) \approx \bar{\phi}_N(\bvec{w}) :=
\frac{1}{N}\sum_{i=1}^N
\tr\left[\left(\noise\M{F}_i^*\M{W}\M{F}_i+{\Gprinv}\right)^{-1}\right].
\end{equation}
Note that the sample set $\{ \xi_i\}_{i=1}^N$ will be fixed in the
optimization problem. 

Additionally, we note that the map $\bvec w \mapsto\bar{\phi}_N(\bvec w)$ is
convex. Since $\Gpos(\xi, \bvec w)$ is a selfadjoint positive definite operator for
any fixed $\xi \in \Omega$ and $\bvec w \in \mathbb{R}^d$, the convexity of
$\bar{\phi}_N$ follows from (1) the strict convexity of $\M X \mapsto \tr
\left[ \M X^{-1} \right]$ on the cone of selfadjoint positive definite operators
(see~\cite{Pazman86}), (2) 
%$\bvec w$ entering linearly into the affine map
the mapping $\bvec w \mapsto \left[\Gpos(\xi,\bvec w)\right]^{-1} $ 
being affine (for fixed $\xi$), and
(3) the finite sum of convex functions being convex. To argue strict convexity
of $\bar\phi_N$, we need an additional assumption on the affine map $\bvec w
\mapsto \left[\Gpos(\xi,\bvec w)\right]^{-1}$, that is, we require that for any
$\bvec w_1, \bvec w_2 \in \mathbb{R}^d$ ($\bvec w_1 \not = \bvec w_2$), $
\M{F}_i^*\M{W}_1\M{F}_i \not =  \M{F}_i^*\M{W}_2\M{F}_i$, for at least 
one $i \in \{1, \ldots, N\}$. This will not hold,  
for example, if no information about the parameter $\bvec m$
could be learned from data obtained at one or more sensor locations.

\subsection{Efficient computation of OED objective and its gradient}\label{subsec:Approx_trace}

In~\eqref{eq:finite_expectation}, each term in the summation requires computing
the trace of the inverse of an operator whose
dimension is determined by the discretized parameter dimension $n$. 
For inverse problems governed by PDEs in two and three space dimensions, $n$ is 
typically very large. Thus, 
computing these traces directly is infeasible and methods
based on low-rank spectral decomposition or randomized trace estimation must be
employed~\cite{alexanderian:oed}.  Here, we 
outline an alternate strategy and present a reformulation of
the OED objective that involves computing traces of operators defined on the
measurement space.  In problems where the measurement dimension $d$ is
considerably smaller than the discretized parameter dimension $n$, this approach
provides significant computational savings, in particular in
combination with low-rank approximation of the (preconditioned)
parameter-to-observable map as in \cite{alexanderian:oed}, which we
generalize in section~\ref{sec:mod_red} to accommodate additional model
error.  Moreover, while $n$ grows upon mesh
refinement, $d$ remains fixed due to finite-dimensionality of the
observations. 

The following result facilitates our proposed reformulation of the A-optimal
OED objective. We state the result for a generic 
forward operator $\M{F}$.
\begin{prop}\label{prp:posterior_cov}
The following relation holds:
\begin{equation}\label{equ:posterior_cov}
(\noise\M{F}^* \M{W} \M{F} + \Gpr^{-1})^{-1}  = \Gpr - \noise\Gpr\M{F}^*(\M{I} + 
\noise\M{W} \M{F} \Gpr\M{F}^*)^{-1}\M{W} \M{F} \Gpr.
\end{equation}
\end{prop}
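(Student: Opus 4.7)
The identity is a Sherman--Morrison--Woodbury (SMW) type formula, so the plan is to recognize it as such and then verify it directly by multiplication, which is the cleanest route given that $\M{W}$ may be singular (some sensor weights may vanish), making a naive application of the textbook SMW formula awkward.

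Concretely, write $L := \noise \M{F}^* \M{W} \M{F} + \Gprinv$ and $R := \Gpr - \noise \Gpr \M{F}^* (\M{I} + \noise \M{W} \M{F} \Gpr \M{F}^*)^{-1}\M{W}\M{F}\Gpr$. The goal is to show $LR = \M{I}$. First I would compute $L\Gpr = \noise \M{F}^* \M{W} \M{F} \Gpr + \M{I}$. Next I would apply $L$ to the second summand of $R$; the $\Gprinv$ cancels the leading $\Gpr$, and the two resulting pieces combine as
\begin{equation*}
L \cdot \noise \Gpr \M{F}^* = \noise \bigl( \noise \M{F}^* \M{W} \M{F} \Gpr \M{F}^* + \M{F}^* \bigr)
= \noise \M{F}^* \bigl(\M{I} + \noise \M{W} \M{F} \Gpr \M{F}^*\bigr).
\end{equation*}
This is the key telescoping step: the factor $(\M{I} + \noise \M{W} \M{F} \Gpr \M{F}^*)$ produced here is exactly the one being inverted in the second term of $R$, so after multiplying by $(\M{I} + \noise \M{W} \M{F} \Gpr \M{F}^*)^{-1}\M{W}\M{F}\Gpr$ on the right, the term collapses to $\noise \M{F}^* \M{W} \M{F} \Gpr$, which cancels against the corresponding term in $L\Gpr$ and leaves $\M{I}$.

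The one technical point to verify before the manipulation is legitimate is that $\M{I} + \noise\M{W}\M{F}\Gpr\M{F}^*$ is invertible on $\R^d$. This I would justify by noting that $\M{W}\M{F}\Gpr\M{F}^*$ has the same nonzero spectrum as $\M{W}^{1/2}\M{F}\Gpr\M{F}^*\M{W}^{1/2}$, which is symmetric positive semidefinite; hence every eigenvalue of $\M{I} + \noise\M{W}\M{F}\Gpr\M{F}^*$ is at least $1$. Invertibility of $L$ itself is automatic because $\Gprinv$ is positive definite and $\noise \M{F}^*\M{W}\M{F}$ is positive semidefinite.

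There is no real obstacle here beyond careful noncommutative bookkeeping: one must take care to factor $\M{F}^*$ out on the correct side and to resist the temptation to cancel $\M{F}$ and $\M{F}^*$ factors that are not in fact invertible. The only mildly subtle move is the grouping $\noise \M{F}^* \M{W} \M{F} \Gpr \M{F}^* + \M{F}^* = \M{F}^*(\noise \M{W} \M{F} \Gpr \M{F}^* + \M{I})$ on the left side of $\M{F}^*$, which is what makes the telescope close.
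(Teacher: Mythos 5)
Your proof is correct, and it takes a slightly different route from the paper's: where the paper invokes the Sherman--Morrison--Woodbury identity as a citation (with $\M{A}=\Gprinv$, $\M{U}=\noise\M{F}^*$, $\M{V}^T=\M{W}\M{F}$) and then devotes its effort to checking the hypothesis that $\M{I}+\M{W}\M{F}\Gpr\M{F}^*$ is invertible, you re-derive the identity from scratch by verifying $LR=\M{I}$ directly, with the telescoping step $L\cdot\noise\Gpr\M{F}^* = \noise\M{F}^*(\M{I}+\noise\M{W}\M{F}\Gpr\M{F}^*)$ doing the work. The direct verification buys self-containedness and sidesteps any worry about which version of SMW applies when $\M{W}$ is singular (though the paper's choice of absorbing $\M{W}$ into $\M{V}^T$ already avoids that issue); the paper's route is shorter by citation. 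The substantive technical content is the same in both: one must show $\M{I}+\noise\M{W}\M{F}\Gpr\M{F}^*$ is invertible, and your argument via the shared nonzero spectrum of $\M{W}\M{F}\Gpr\M{F}^*$ and $\M{W}^{1/2}\M{F}\Gpr\M{F}^*\M{W}^{1/2}$ is essentially the standard proof of the fact the paper uses (that a product of two symmetric positive semidefinite matrices has nonnegative eigenvalues). One small point you gloss over that the paper spells out: symmetry of $\M{F}\Gpr\M{F}^*$ is not immediate in this discretized setting, because the adjoint is taken with respect to the mass-weighted inner product, $\M{F}^*=\M{M}^{-1}\M{F}^T$; it follows from $\Gpr^*=\M{M}^{-1}\Gpr^T\M{M}=\Gpr$, and is worth a line since your positive-semidefiniteness claim for $\M{W}^{1/2}\M{F}\Gpr\M{F}^*\M{W}^{1/2}$ rests on it. In finite dimensions $LR=\M{I}$ already forces $RL=\M{I}$, so your separate remark on the invertibility of $L$ is a nicety rather than a necessity.
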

\begin{proof}
The result follows by the Sherman-Morrison-Woodbury identity
\cite[2.1.4]{golub2012matrix} , which states that for matrices $\M{A}
\in \mathbb{R}^{n \times n}$ and $\M{U},\M{V} \in \mathbb{R}^{n \times
  d}$,  $\M{A} + \M{U}\M{V}^T = \M{A}^{-1} -
\M{A}^{-1}\M{U}\left(\M{I} +
\M{V}^T\M{A}^{-1}\M{U}\right)^{-1}\M{V}^T\M{A}^{-1}$ provided
$\left(\M{I} + \M{V}^T\M{A}^{-1}\M{U}\right)$ and $\M{A}$ are
invertible. Setting $\M{A} := \Gpr^{-1}, \M{U} := \noise\M{F}^*$ and
$\M{V}^T :=  \M{W} \M{F}$, it is thus sufficient to prove the invertibility of the matrix $\M{I} + \M{W} \M{F} \Gpr\M{F}^*$.
 
With no loss of generality, we assume $\sigma^2 = 1$ for simplicity.
In the present setup, we have $\Gpr^* = \M{M}^{-1} \Gpr^T \M{M} = \Gpr$,
where $\M{M}$ is symmetric positive definite. Moreover, we have that
$\M{F}^* = \M{M}^{-1} \M{F}^T$ and thus
\[
(\M{F} \Gpr\M{F}^*)^T 
= (\M{F}^*)^T \Gpr^T \M{F}^T
= \M{F} \M{M}^{-1} \Gpr^T \M{M} \M{F}^*
= \M{F} \Gpr \M{F}^*.  
\] 
Hence, $\M{F} \Gpr\M{F}^*$ is symmetric; it is also clearly positive semidefinite.
Since $\M{W}$ and $\M{F} \Gpr\M{F}^*$ are both symmetric 
positive semidefinite matrices, their product 
$\M{W} \M{F} \Gpr\M{F}^*$
has nonnegative eigenvalues. Hence,
$\M{I} + \M{W} \M{F} \Gpr\M{F}^*$ is invertible, which ends the proof.
%Next, note that
%\[
%   \begin{aligned}
%   (\M{F}^* \M{W} \M{F} &+ \Gpr^{-1})(\Gpr - \Gpr\M{F}^*(\M{I} + \M{W} \M{F} \Gpr\M{F}^*)^{-1}\M{W} \M{F} \Gpr)\\
%   &=
%   \M{F}^* \M{W} \M{F}\Gpr\\
%   &\quad - \M{F}^* \M{W} \M{F}\Gpr\M{F}^*(\M{I} + \M{W} \M{F} \Gpr\M{F}^*)^{-1}\M{W} \M{F} \Gpr \\
%   &\quad + \M{I} - \M{F}^*(\M{I} + \M{W} \M{F} \Gpr\M{F}^*)^{-1}\M{W} \M{F} \Gpr\\
%  &= \M{I} + \M{F}^* \M{W} \M{F}\Gpr - \M{F}^* ( \M{W} \M{F}\Gpr\M{F}^* + \M{I}) (\M{I} + \M{W} \M{F} \Gpr\M{F}^*)^{-1}\M{W} \M{F}
%     \Gpr\\
%  &= \M{I} +  \M{F}^* \M{W} \M{F}\Gpr - \M{F}^*\M{W} \M{F} \Gpr = \M{I}. \qedhere
%   \end{aligned}
%\]
%
\end{proof}

Using this result, we have, for all $i \in \{1, \ldots, N\}$,
\begin{equation}\label{eq:sherman_morrison} \Gpos(\xi_i,\bvec{w}) = \Gpr - \noise\Gpr\M{F}^*_i \M{S}^{-1}_i(\bvec{w}) \M{W}\M{F}_i\Gpr,
\end{equation}
where $\M{S}_i(\bvec{w}) := \left( \M{I} + \noise\M{W}\M{F}_i\Gpr\M{F}^*_i \right)$. 
Using properties of the 
trace, 
\begin{equation}\label{eq:inside_out} 
\tr\left[ \Gpos(\xi_i,\bvec{w}) \right] = 
\tr \left[ \Gpr \right]-\tr \left[ \noise\M{S}^{-1}_i(\bvec{w})\M{W}\M{F}_i\Gprsq\M{F}^*_i \right].
\end{equation}  
Denoting 
\[\M{K}(\xi_i,\bvec{w}) =  \noise\M{S}^{-1}_i(\bvec{w})\M{W}\M{F}_i\Gprsq\M{F}^*_i,
\]
the discretized OED objective $\bar{\phi}_N(\bvec w)$ in \eqref{eq:finite_expectation} can thus be rewritten as 
\begin{equation}\label{eq:phiNintro}
\bar{\phi}_N(\bvec w) = \tr \left[ \Gpr \right] + \phi_N(\bvec w), 
\end{equation} where   
\begin{equation}\label{eq:phiN}
\phiN(\bvec{w}) := -\frac{1}{N}\sum_{i=1}^{N} \tr \left[ \M{K}(\xi_i,\bvec{w}) \right]
=
-\frac{1}{N}\sum_{i=1}^{N}\sum_{j=1}^{d} \langle \bvec{e}_j , \M{K}(\xi_i,\bvec{w}) \bvec{e}_j \rangle. 
\end{equation} Here $\langle \cdot\,,\cdot \rangle$ denotes the Euclidean inner product.

Since $\Gpr$ is independent of
$\bvec{w}$, we can neglect that term in the optimization and focus on minimizing $\phi_N(\bvec w)$. The 
optimization problem for finding an OED is then,
\begin{equation}\label{eq:disc_functional}
\min_{\bvec{w} \in [0, 1]^{\Ns}}
{\phiN}(\bvec{w}) + \gamma \psi \left( \bvec w \right).
\end{equation}
Note that since $\bar{\phi}_N$ (and hence $\phi_N$)  
is convex, the above objective is convex as long as the
penalty function is convex. 

We next derive the gradient of $\phiN(\bvec{w})$.
We begin by defining some notations to facilitate this derivation.
Let us denote 
$\M{W}^\sigma := \noise \M{W}$. Note that 
\[
    \M{W}^\sigma =  \sum_{l = 1}^{\Ns} w_l \M{E}_l \quad
    \text{with} \quad
    \M{E}_l = \noise \M{I}_r \otimes \bvec{e}_l \bvec{e}_l^T,
\]
where 
$\bvec{e}_l$ the
$l$th coordinate vector in $\R^{\Ns}$ and $\M{I}_r$ the $r 
\times r$ identity matrix. 
%
%
%First, we present a convenient representation of the weight matrix $\M{W}$.  To
%do so, we absorb the noise variance into $\M{W}$ and let $\M{E}_l$ denote a
%block-diagonal matrix with the diagonal blocks given by $\sigma^{-2}\bvec{e}_l
%\bvec{e}_l^T$,\footnote{Note that the block-diagonal structure is due to our
%problems being time-dependent. Once a sensor is placed, we can always take
%measurements there. For time-independent problems, $\Nt  = 1$ and $\M{E}_l$
%simplifies to an $\Ns$-dimensional diagonal matrix.} where $\bvec{e}_l$ is the
%$l$th coordinate vector in $\R^{\Ns}$. That is, letting $\M{I}_r$ be the $r
%\times r$ identity, $\M{E}_l = \noise \M{I}_r \bigotimes \bvec{e}_l
%\bvec{e}_l^T$.  With this notation, $\M{W} = \sum_{l = 1}^{\Ns} w_l \M{E}_l$
%and $\frac{\partial \M{W}}{\partial  w_k} = \M{E}_k$.
%
%
%
Next, we consider the partial derivatives of $\M{S}_i(\bvec{w})$ 
which appears in the definition of
$\M{K}(\xi_i, \bvec{w})$ in~\eqref{eq:phiN}. Note that with the notation we 
just introduced 
\[
\M{K}(\xi_i,\bvec{w}) = \M{S}^{-1}_i(\bvec{w})\M{W}^\sigma\M{F}_i\Gprsq\M{F}^*_i, 
\quad\text{with}\quad
\M{S}_i(\bvec{w}) = \left( \M{I} + \M{W}^\sigma\M{F}_i\Gpr\M{F}^*_i \right), 
\quad i = 1, \ldots, N.
\]
It is straightforward to see that
\begin{equation}\label{eq:partialSi}
\frac{\partial \M{S}_i(\bvec w)}{\partial  w_k} 
= \M{E}_k \M{F}_i \Gpr \M{F}_i^*,
\quad k = 1, \ldots, s, \, i = 1, \ldots, N.
\end{equation}
Then, 
for $k = 1, \ldots, s$,
\begin{align}\label{eq:gradfull}
     \frac{\partial \phiN(\bvec w)}{\partial w_k} 
     &=-\frac1N\sum_{i=1}^N \sum_{j=1}^{d} 
        \langle \bvec{e}_j, 
        \left( \M{S}_i^{-1}(\bvec w)\frac{\partial\M{W}^\sigma}{\partial  w_k} - 
            \M{S}_i^{-1}(\bvec w)\frac{\partial \M{S}_i(\bvec w)}{\partial  w_k}\M{S}_i^{-1}(\bvec w)\M{W}^\sigma
        \right) 
        \M{F}_i \Gprsq \M{F}_i^* \bvec{e}_j\rangle \nonumber \\
    &= -\frac1N\sum_{i=1}^N\sum_{j=1}^{d} 
        \langle \bvec{e}_j ,  
        \M{S}_i^{-1}(\bvec{w})\M{E}_k 
        \left( \M{I} -\M{{F}}_i \Gpr \M{{F}}_i^* \M{S}_i^{-1}(\bvec w)\M{W}^\sigma
        \right) 
        \M{{F}}_i {\Gprsq}\M{{F}}_i^* \bvec{e}_j \rangle \nonumber\\
    &= -\frac1N\sum_{i=1}^N \tr \left[ \M{S}_i^{-1}(\bvec w)\M{E}_k \M{A}_i(\bvec{w}) \M{B}_i \right] ,
     \end{align}
where $\M{A}_i(\bvec{w}) := \M{I} -\M{{F}}_i \Gpr\M{{F}}_i^* \M{S}_i^{-1}(\bvec w)\M{W}^\sigma$
and $\M{B}_i := \M{{F}}_i {\Gprsq}\M{{F}}_i^*$, $i=1, \ldots, N$. 

% **********************************
\section{Model reduction}\label{sec:mod_red}

Evaluating the objective function in \eqref{eq:disc_functional} and its partial
derivatives in \eqref{eq:gradfull} requires many discretized PDE solves. 
Specifically, computation of the trace in the OED objective function
\eqref{eq:disc_functional} requires $d$ applications of $\M{K}(\xi_i,\bvec{w})$
for each $i = 1,\ldots,N$; see~\eqref{eq:phiN}. Each application of $\M{K}(\xi_i,\bvec{w})$ involves
a forward and adjoint PDE solve as well as the inverse of the operator
$\M{S}_i(\bvec{w})$, and each application of 
$\M{S}_i(\bvec{w})$ requires two PDE solves. 
Additionally, evaluating the partial derivatives $\frac{\partial
\phiN(\bvec w)}{\partial w_k} $ for each $k = 1,\ldots,s$ requires $d$
applications of $\M{S}_i^{-1}(\bvec w)\M{E}_k \M{A}_i(\bvec{w}) \M{B}_i$ for $i
= 1,\ldots,N$, each of which require four PDE solves and two applications of
$\M{S}_i^{-1}(\bvec w)$. Since computing the objective and its gradient is
required in each step of an optimization procedure, the resulting
large number of  PDE
solves can become computationally infeasible. 

In this section, we propose a method for replacing PDE solves in the OED
objective and gradient computation with a reduced model to make the optimization computationally
tractable.
%It is important to note that the PDEs are only eliminated in the
%optimization of the objective function \eqref{eq:disc_functional}.
The upfront computation of this reduced order model (ROM) for $\M{F}_i$
for $i = 1,\ldots,N$ requires upfront forward and adjoint PDE solves.
Once the reduced order models are computed, we can solve the
optimization problem for all choices of $\gamma$ without requiring
additional PDE solves. In section~\ref{subsec:comp_lr}, we exploit the
problem 
structure to find low-dimensional subspaces of the observation and parameter
spaces that capture the effective action for each forward operator sample
$\M{F}_i$, $i = 1, \ldots, N$ (preconditioned by the prior).  As discussed in
section~\ref{subsec:clustering}, this can be made more efficient by clustering
the samples of uncertain parameters $\{\xi_i\}_{i=1}^N$ 
such that the corresponding forward operator
samples ($\M{F}_i$'s) in each cluster share similar features.
Then, low-dimensional bases are computed for each cluster. 

\subsection{Composite low-rank basis}\label{subsec:comp_lr}
The article \cite{alexanderian:oed}, which concerns OED with no model
uncertainty, proposes computing a low-rank approximation to the
prior-preconditioned forward map $\widetilde{\M{F}}_i := {\M{F}}_i\Gprsqrt$ in
terms of a low-rank singular value decomposition (SVD); in that article only one copy
of $\widetilde{\M{F}}_i$ is considered, due to lack of model uncertainty. 
The prior-preconditioned forward operator is commonly low-rank due to
properties of the inverse problem, the limited number of observations
and the smoothing properties of the prior.  Following this 
procedure directly would require computing and storing the left and right singular vectors for each $\widetilde{\M{F}}_i$, $i = 1,\ldots,N$. While each individual
map may require a small number of vectors to approximate its effective domain and
range, computing and storing such a low-rank approximation for every
$\widetilde{\M{F}}_i$ individually 
could become
infeasible. Additionally, there may be some overlap in the singular vectors required to approximate each forward operator. Thus, we propose a method for finding spaces that capture the effective composite action of $\widetilde{\M{F}}_i$, $i = 1, \ldots, N$. 

Accordingly,
we seek to find two matrices, $\M{Q} \in \mathbb{R}^{d \times k}$ and $\widehat{\M{Q}} \in \mathbb{R}^{n \times k}$, with $k \in \mathbb{N}$ as small as possible, such that
\begin{equation}
\begin{aligned}\label{eq:composite_basis}
[\widetilde{\M{F}}_1,\ldots,\widetilde{\M{F}}_N] &\approx  \M{Q}{\M{Q}}^T [\widetilde{\M{F}}_1,\ldots,\widetilde{\M{F}}_N],  \\
[\widetilde{\M{F}}^*_1,\ldots,\widetilde{\M{F}}^*_N] &\approx \widehat{\M{Q}}\widehat{{\M{Q}}}^T[\widetilde{\M{F}}^*_1,\ldots,\widetilde{\M{F}}^*_N]. 
\end{aligned}
\end{equation}
 While the composite spaces spanned by $\M{Q}$ and $\widehat{\M{Q}}$ do not give us precise bounds for $\|\widetilde{\M{F}}_i - \M{Q}{\M{Q}}^T\widetilde{\M{F}}_i \widehat{\M{Q}} {\widehat{\M{Q}}^T} \|$ for each individual $\widetilde{\M{F}}_i$ ($i = 1,\ldots,N$), in practice we find that 
\begin{equation}\label{eq:FQF}
\widetilde{\M{F}}_i \approx \M{Q}{\M{Q}}^T\widetilde{\M{F}}_i \widehat{\M{Q}} {\widehat{\M{Q}}^T},
\quad i = 1,\ldots,N.
\end{equation}
We find $\M{Q}$ and $\widehat{\M{Q}}$ via the randomized range finder algorithm (RRF) 
\cite[Algorithm 4.1]{HalkoMartinssonTropp11}. The idea is to simultaneously compute a basis for the subspaces of $\mathbb{R}^d$ and $\mathbb{R}^n$ that capture the action of each $\widetilde{\M{F}}_i$  and $\widetilde{\M{F}}_i^*$ ($i = 1,\ldots,N$) respectively. To do so, we choose a $d \times r$ Gaussian random matrix\footnote{By Gaussian random matrix, $\M{\Omega} \in \mathbb{R}^{d \times k}$, 
we mean that entries of $\M{\Omega}$ are independent standard normal random variables.} $\M{\Omega}_i$ and an $n \times r$ Gaussian random matrix $\widehat{\M{\Omega}}_i$ and compute $\M{Y}_i = \widetilde{\M{F}}_i \M{\Omega}_i$ and  $\widehat{\M{Y}}_i = \widetilde{\M{F}}^*_i \widehat{\M{\Omega}}_i$ for each $i$. The SVDs of $[\M{Y}_1,\ldots,\M{Y}_N]$ and $[\widehat{\M{Y}}_1,\ldots,\widehat{\M{Y}}_N]$ are computed and truncated up to a specified tolerance to obtain $\M{Q}$ and $\widehat{\M{Q}}$ respectively. The algorithm is summarized in Algorithm \ref{alg:RRF}.

\begin{algorithm}[H]
\begin{algorithmic}[1]
\Procedure{CRF}{$[\widetilde{\M{F}}_1,\ldots,\widetilde{\M{F}}_N], [\widetilde{\M{F}}^*_1,\ldots,\widetilde{\M{F}}^*_N], \mu$}

 \State Given $N$ Gaussian random matrices $\M{\Omega}_i \in \mathbb{R}^{d\times r}$ and $N$ Gaussian random matrices $\widehat{\M{\Omega}}_i \in \mathbb{R}^{n \times r}$, $i = 1,\ldots,N$
 \State Compute $\M{Y}_i = \widetilde{\M{F}}_i\M{\Omega}_i$ and $\widehat{\M{Y}}_i = \widetilde{\M{F}}^*_i\widehat{\M{\Omega}}_i$
 \State Set $\M{Y} = [\M{Y}_1,\ldots,\M{Y}_N]$ and $\widehat{\M{Y}} = [\widehat{\M{Y}}_1,\ldots,\widehat{\M{Y}}_N]$
 \State Compute SVDs of $\M{Y} = \M{U}\M{\Sigma}\M{V}^T$ and of $\widehat{\M{Y}} = \widehat{\M{U}}\widehat{\M{\Sigma}}\widehat{\M{V}}^T$ with singular values $\sigma_j$ and $\widehat{\sigma}_l$ ($j = 1,\ldots,d$, $l = 1,\ldots,n$) respectively 
 \State Set $k = \max \{ \max \left({j} \mbox{  s.t.  } \frac{\sigma_{j}}{\sigma_{1}} \leq \mu \right), \max \left({l} \mbox{  s.t.  } \frac{\widehat{\sigma}_{l}}{\widehat{\sigma}_{1}} \leq \mu \right)  \}$
 \State Set $\M{Q} $ to be the first $k$ columns of $\M{U}$ and $\widehat{\M{Q}}$ to be the first $k$ columns of $\widehat{\M{U}}$ \\
 \Return $\M{Q}$ and $\widehat{\M{Q}}$
 \EndProcedure
 \end{algorithmic}
 \caption{Composite randomized range finder algorithm}\label{alg:RRF} 
\end{algorithm}

To fully eliminate the PDEs from the optimization problem, the small
inner matrices ${\M{Q}}^T\widetilde{\M{F}}_i\widehat{\M{Q}}$ in
\eqref{eq:FQF} must be computed and stored for each $i = 1,\ldots,N$,
which requires solving $k$ more PDEs for each forward operator sample. If
desired, these additional PDE solves can be avoided at the cost of
additional error by modifying the single-pass approach presented in
\cite{HalkoMartinssonTropp11}. Specifically, a matrix $\M{B}_i$
approximating ${\M{Q}}^T\widetilde{\M{F}}_i\widehat{\M{Q}}$ for each
$i = 1,\ldots,N$ can be found using a minimal residual method to approximately satisfy the relations
\begin{equation}\label{eq:single-pass}
\M{B}_i\widehat{\M{Q}}^T\M{\Omega}_i = \M{Q}^T \M{Y}_i \quad \text{and} \quad
\M{B}_i^T\M{Q}^T\widehat{\M{\Omega}}_i = \widehat{\M{Q}}^T \widehat{\M{Y}}_i. 
\end{equation}

\subsection{Clustering}\label{subsec:clustering}
To reduce the amount of PDE solves needed to compute the inner
matrices and the amount of basis vectors stored, we follow the ideas
presented in \cite{peherstorfer2014} and break up the sample space
$\Omega$ into $l$ clusters. To do this, we use a standard $k$-means
clustering algorithm where we define the distance measure between two
samples $\xi_i$ and $\xi_j$ as the Euclidean distance between the
observations they produce for an instance of the inversion parameter
$\bvec m$. Specifically,
for an $\bvec m \in \mathbb{R}^n$, we define the distance between two
samples as
\begin{equation}\label{eq:distance} d(\xi_i,\xi_j;\bvec{m})
  = \| \M{F}_i\bvec{m}-\M{F}_j\bvec{m} \|_2.
\end{equation} Naturally, this distance depends on the choice of
$\bvec{m}$. One could pick $\bvec{m}$ as a random draw from the prior
distribution or one could pick a suitable parameter $\bvec{m}$, which is an
interesting design problem itself.  For the model problem used in this
paper, we choose the parameter as a sum of
radial basis functions with different centers and
magnitudes.

We compute matrices $\M{Q}_{p}$ and $\widehat{\M{Q}}_p$ of rank $k_p$ using
Algorithm \ref{alg:RRF} for each cluster $p = 1,\ldots,l$ to approximate the
effective action of all the forward operators in the cluster. This preliminary
clustering step allows us to reduce the amount of PDE solves needed when
computing the inner matrices in Algorithm \ref{alg:RRF}. In addition to $k_p$
being smaller than $k$ due to the clusters containing less samples, sample
parameters $\xi_i$ and $\xi_j$ corresponding to the same cluster $p$ produce
more similar data than parameters in different clusters. This means that
forward operators corresponding to the same cluster have overlapping range
spaces, thus requiring less basis vectors to cover them. 

The updated composite low-rank basis algorithm given is provided in Algorithm
\ref{alg:composite_basis}, where the $\text{CLUSTER}$ procedure refers to a standard k-means clustering procedure which takes as inputs the synthetic observations obtained for an instance of the inversion parameter ($[\widetilde{\M{F}}_1\bvec m,\ldots,\widetilde{\M{F}}_N\bvec m]$), the number of desired clusters $l$, and a tolerance tol, and outputs the clusters $[c_1,\ldots,c_N]$ for each forward operator $\widetilde{\M{F}}_i$ ($i = 1,\ldots,N$).

\begin{algorithm}[H]
\begin{algorithmic}[1]
\Procedure{CRRFWC}{$[\widetilde{\M{F}}_1,\ldots,\widetilde{\M{F}}_N],[\widetilde{\M{F}}^*_1,\ldots,\widetilde{\M{F}}^*_N],\mu, l,\bvec{m}$}
 \State $[c_1,\ldots,c_N] = \mbox{CLUSTER}([\widetilde{\M{F}}_1\bvec m,\ldots,\widetilde{\M{F}}_N\bvec m], l, \mbox{tol})$
 \State Initialize $\M{Q} =$ [ ] and $\widehat{\M{Q}} =$ [ ]
\For{$i = 1,\ldots,l$}
 \State Initialize $\widetilde{\M{F}} =$ [ ] and $\widetilde{\M{F}}^* = $ [], 
 \For{$j = 1,\ldots,N$}
   \If{$c_j == i$}
   append $\widetilde{\M{F}}_j$ to $\widetilde{\M{F}}$ and append $\widetilde{\M{F}}^*_j$ to $\widetilde{\M{F}}^*$ 
 \EndIf
 \EndFor
 \State Compute $[\M{Q}_{i},\widehat{\M{Q}}_{i}] = \mbox{CRF}[\widetilde{\M{F}},\widetilde{\M{F}}^*,\mu]$
 \State Append to $\M{Q}$, $\M{Q}  = [\M{Q} ,\M{Q}_{i}]$ and to $\widehat{\M{Q}}$, $\widehat{\M{Q}}  = [\widehat{\M{Q}} , \widehat{\M{Q}}_{i}]$
\EndFor
\For{$i = 1,\ldots,N$}
\State $\M{Q}_{c_i}\M{F}\widehat{\M{Q}}_{c_i} = \M{Q}_{c_i}\widetilde{\M{F}}_{i}\widehat{\M{Q}}_{c_i}$
\EndFor
\Return [$\M{Q}_1,\ldots,\M{Q}_l$],[$\widehat{\M{Q}}_1,\ldots,\widehat{\M{Q}}_l$] and [$\M{Q}_{c_1}\M{F}\widehat{\M{Q}}_{c_1},\ldots,\M{Q}_{c_N}\M{F}\widehat{\M{Q}}_{c_N}$]
 \EndProcedure
 \end{algorithmic}
 \caption{Composite randomized range finder algorithm with clustering
}\label{alg:composite_basis}
\end{algorithm}

% *************************************************
% *************************************************

\section{Subsurface flow example: the inverse problem}\label{sec:IP}

This section is devoted to the description of the example inverse
problem used to illustrate our methods.  The subsurface flow forward
and inverse problems are described in this section. The sources of
irreducible uncertainty, which enter in this problem, are discussed in
section~\ref{sec:darcy}, and optimal designs taking into account the
irreducible uncertainty are the topic of section~\ref{sec:numerics}.
The inverse problem we consider seeks to estimate an 
uncertain initial contaminant concentration field using sensor
measurements of contaminant concentration recorded at a discrete set
of observation times.

\subsection{The forward problem}\label{subsec:motivation}
We consider the transport of a contaminant in a rectangular domain $D := [0,a]
\times [0,b] \subset \mathbb{R}^2$.~\footnote{This two-dimensional setting can be
understood as a top-down view of the evolution of an initial concentration in a
horizontal slice of an aquifer or a slice resulting from averaging the
properties of a thin 3-dimensional domain.}
The evolution of the contaminant's concentration, $u(\bvec{x}, t)$, in 
groundwater flow is modeled
by a time-dependent advection-diffusion equation
\begin{equation}\label{eq:ad-diff}
\begin{aligned}
u_t-\kappa\Delta{u}+\bvecS v\cdot\nabla{u} &= 0 \hspace{3mm} &&\mbox{   in } D \times (T_0,T_1), \\
u(\cdot,T_0) &= m \hspace{3mm} &&\mbox{   in } D, \\
(-\kappa\nabla{u} + u \bvecS{v}) \cdot\bvecS n  &= 0 \hspace{3mm} &&\mbox{   in }
\Gamma_{\!l} \times (T_0,T_1), \\
\kappa\nabla{u}\cdot \bvecS n &= 0 \hspace{3mm} &&\mbox{   in }
\partial D \setminus \Gamma_{\!l} \times (T_0,T_1),
\end{aligned}\end{equation}
where $\kappa > 0$ is a known diffusion coefficient, $\bvecS{v}$ is the
advection velocity field, $T_0 < T_1$ are the initial and the final
time, respectively,
and $m \in L_2(D)$ is the initial concentration field. 
Here, $\Gamma_{\! l}:=\{0\}\times [0,b]$ is the left boundary of the domain.
We assume $\Gamma_{\! l}$ is impermeable, as modeled by the zero  
total flux condition at that boundary. We want the contaminant to be able to leave the domain, so we allow it to 
advect freely through the remaining portion of the boundary, $\partial
D \setminus \Gamma_{\! l}$; this is modeled 
by imposing a homogeneous pure Neumann condition.
For a more detailed explanation and a derivation of a model for two-dimensional
flow in an aquifer, see e.g., \cite[section 5.3]{bear2018modeling}.  

A major source of uncertainty in the governing
equation~\eqref{eq:ad-diff} is the velocity field $\bvecS{v}$, which is
an irreducible model uncertainty considered herein. Moreover, the
time interval $[T_0,T_1]$ might be uncertain as we can only estimate
how long ago a contaminant has been released.  Our model for
these irreducible uncertainties is detailed in section~\ref{sec:darcy}.
Before that, we detail the Bayesian inverse problem for fixed
advection velocity $\bvecS{v}$ and time interval $[T_0,T_1]$.

\subsection{Bayesian inversion for initial state}\label{subsec:subsurf_fm}
For the Bayesian inversion of the initial concentration $m$ given a fixed 
velocity field $\bvecS{v}_i$ and a fixed time interval $[T_0,T_1]$, 
we impose a Laplacian-like prior. 
We define the prior operator on the domain $D$ as $\Cpr :=
\mathcal{A}^{-2} := (-\rho\Delta+\delta I)^{-2}$. To reduce the variance
near the boundary of the domain resulting from combining the
differential operator $\mathcal{A}$ with Neumann boundary conditions,
we instead impose
Robin boundary conditions \cite{daon2016mitigating,roininen2014whittle}. 
Accordingly, given $s \in L^2(D)$, the weak solution $m$ of $\mathcal{A}m = s$ 
satisfies
\begin{equation}\label{eq:priorWeak}
\rho \int_D \nabla m \cdot \nabla p \, d\bvec{x} + \delta \int_D m p \,d\bvec{x} + \beta \int_{\partial D} mp \, ds = \int_{D} sp \,d\bvec{x}, \quad \text{for all } p \in H^1(D),
\end{equation}
where  $\beta = ({\rho}/{1.42}) \sqrt{{\delta}/{\rho}}$ as
proposed in \cite{roininen2014whittle}. The parameters $\rho$ and
$\delta$  control the correlation length and variance of the
covariance operator, and were set to $\rho = 0.008$ and $\delta =
0.02$ for our simulations as these parameters lead to prior samples
which have realistic smoothness and correlation lenght.

We choose $\Ns = 234$ candidate locations (these locations are shown
in the top right graphic in Figure~\ref{fig:MAPs}) where we can place
sensors. Setting $T_1 = 16$, we assume that at each candidate
location, we can take $\Nt = 5$ concentration measurements at equally
spaced observation times $\tau_i = 7,9,11,13,15$.
Measurements are time-averaged
concentrations over intervals $[\tau_i-0.5,\tau_i+0.5]$ for each $i = 1,\ldots,5$. This leads
to a vector of possible (i.e., if sensors were to be placed on all possible
locations) observations $\bvec{d} \in \mathbb{R}^{\Ns\Nt}$.

\begin{figure}%[htb]
\centering
\begin{tikzpicture}
\node (11) at (-5.5,4)
  {\includegraphics[width=0.5\linewidth]{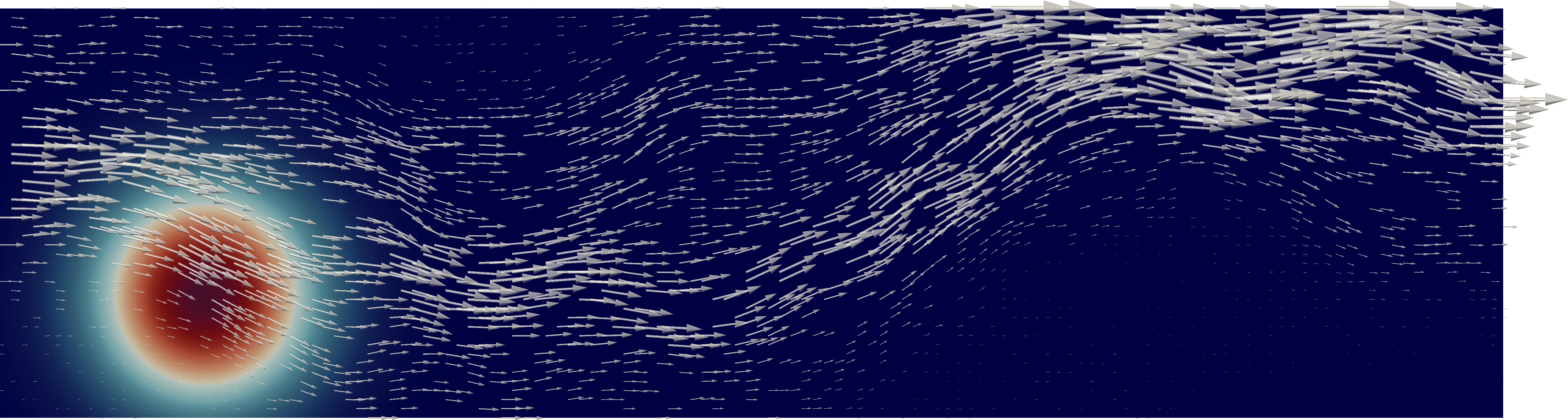}};
\node (12) at (2.5,4)
  {\includegraphics[width=0.5\linewidth]{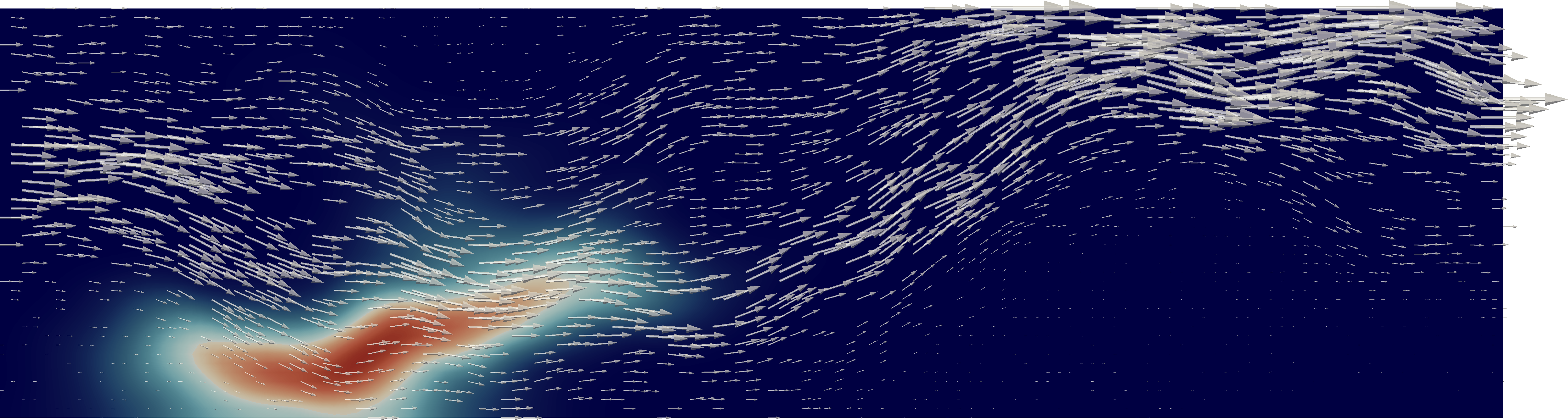}};
\node (21) at (-5.5,2)
  {\includegraphics[width=0.5\linewidth]{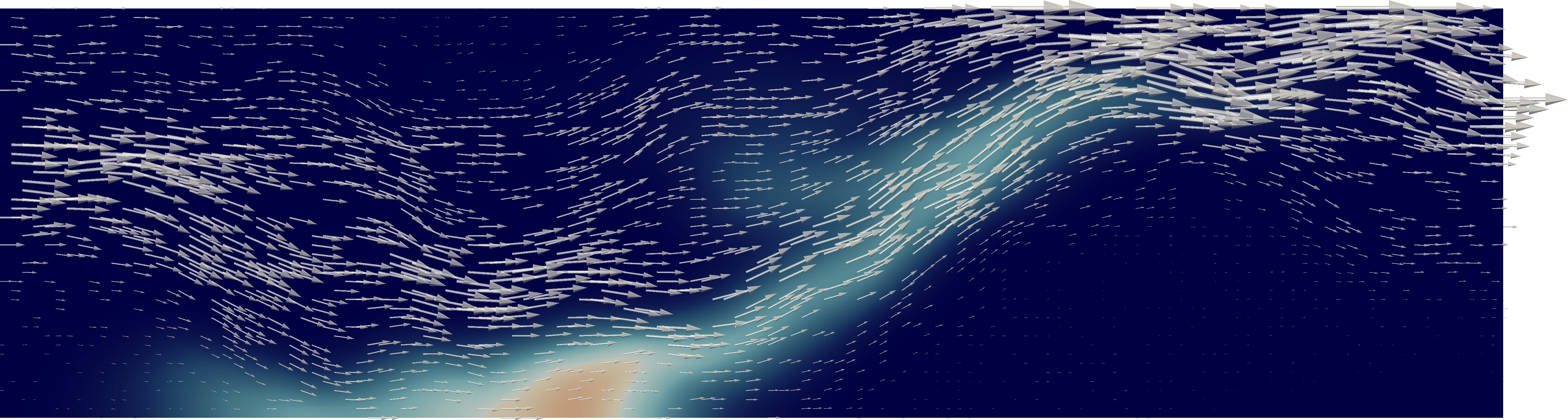}};
\node (22) at (2.5,2)
  {\includegraphics[width=0.5\linewidth]{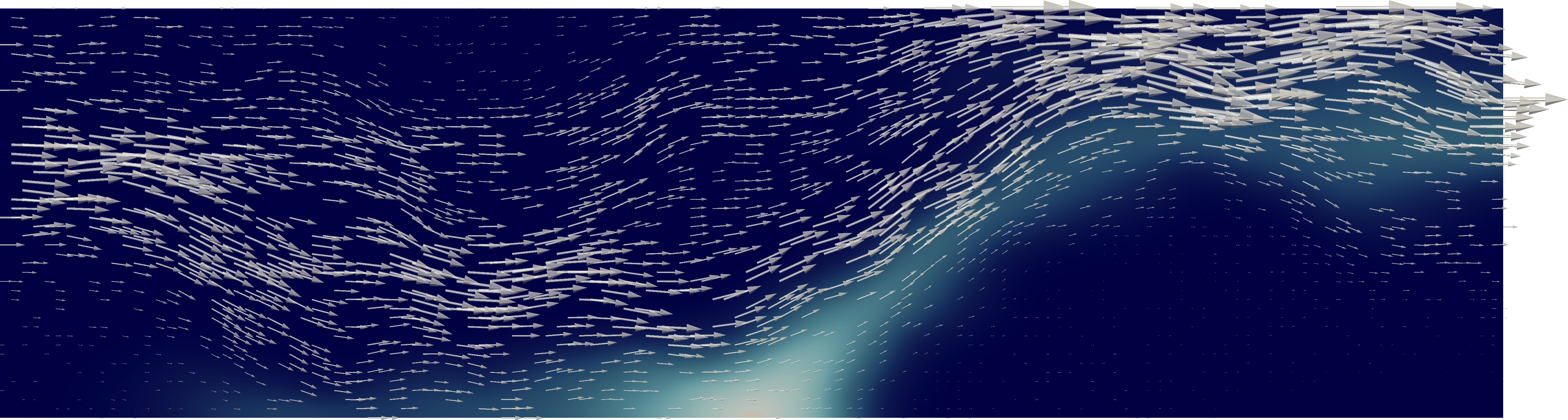}};
  \node at (-2.4,3.4) {\small{\color{white}{$t=0$}}};
  \node at (5.6,3.4) {\small{\color{white}{$t=3$}}};
  \node at (-2.4,1.25) {\small{\color{white}{$t=8$}}};
  \node at (5.6,1.25) {\small{\color{white}{$t=13$}}};
\end{tikzpicture}
\caption{Snapshots of the forward propagation of an initial concentration ($t=0$). The white arrows depict the velocity field used for the simulation.}
\label{fig:fwd_Prop}
\end{figure}

As discussed in section~\ref{subsec:design}, for a fixed velocity
field $\bvecS{v}$, time interval $[T_0,T_1]$, and design $\bvec{w}$,
under the assumption of Gaussian prior and additive Gaussian noise,
the posterior distribution is also Gaussian. In particular, it is
fully characterized by its mean $\mupost$, and posterior covariance
matrix, $\Cpost$. Different sensor placements lead to different MAP
points and different updates to 
the prior covariance matrix, as can be seen in Figures~\ref{fig:MAPs}
and \ref{fig:ptwise_Var}. The advection velocity field used for these
results is also shown in Figure~\ref{fig:MAPs}.
We illustrate the
effect of different design choices on the posterior pointwise variance
in Figure~\ref{fig:ptwise_Var} setting $T_0 = -1$ and $T_1 =
16$. In Figure~\ref{fig:MAPs} we show the MAP points ($\mupost$)
obtained for these same choices of design.

\begin{figure}%[htb]
\centering
\begin{tikzpicture}
\node (11) at (-5.8,4)
  {\includegraphics[width=0.5\linewidth]{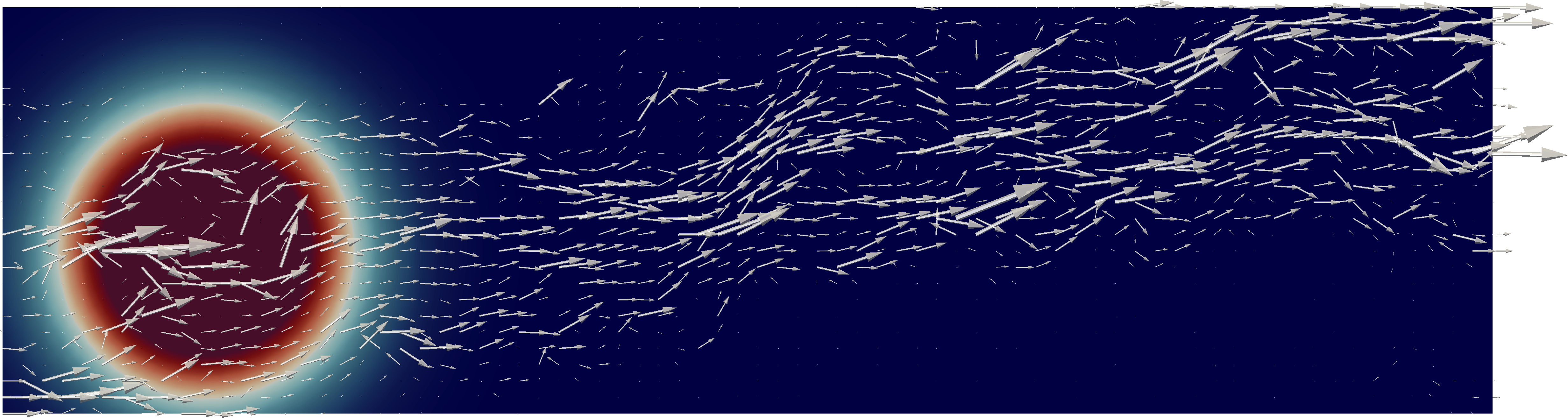}};
\node (12) at (2.0,4)
  {\includegraphics[width=0.47\linewidth]{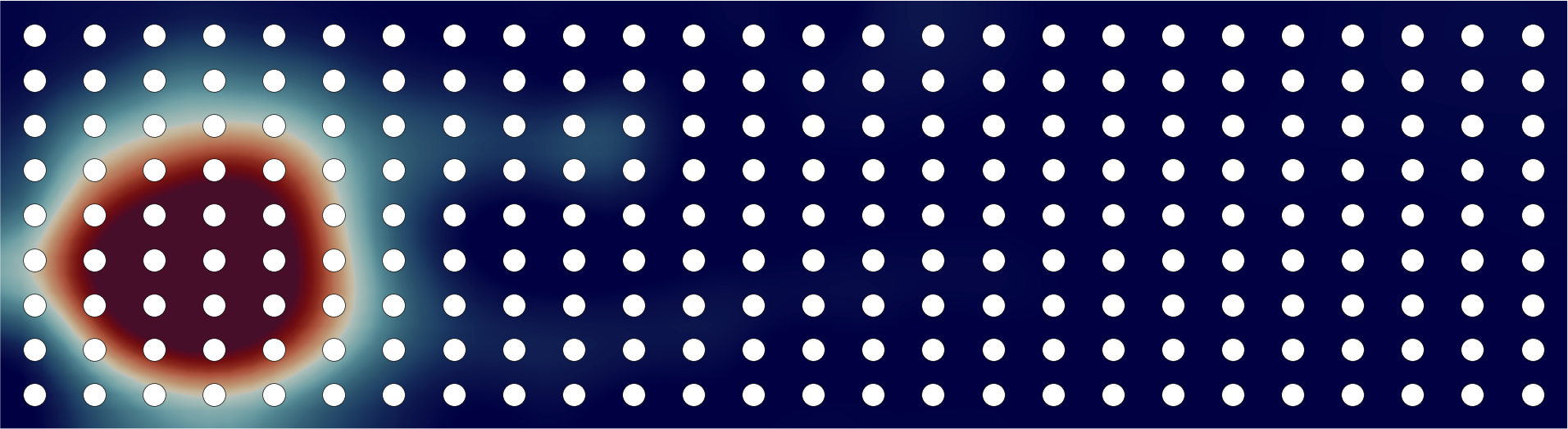}};
\node (21) at (-6.0,1.7)
  {\includegraphics[width=0.475\linewidth]{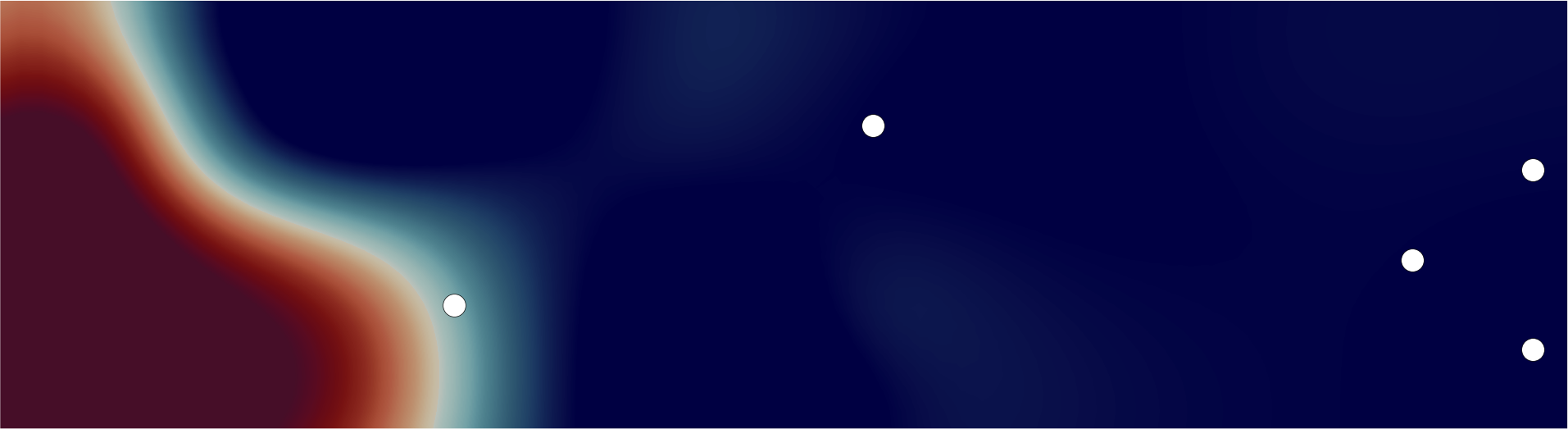}};
\node (22) at (2.0,1.7)
  {\includegraphics[width=0.47\linewidth]{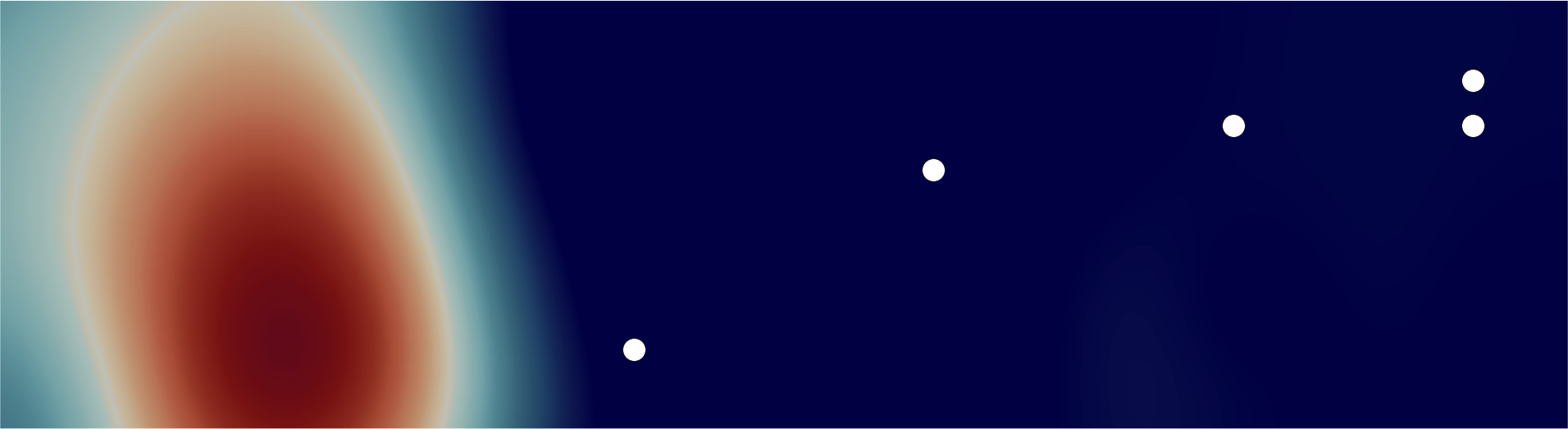}};
\node at (7.0,2.55) 
{\includegraphics[width=0.1\linewidth]{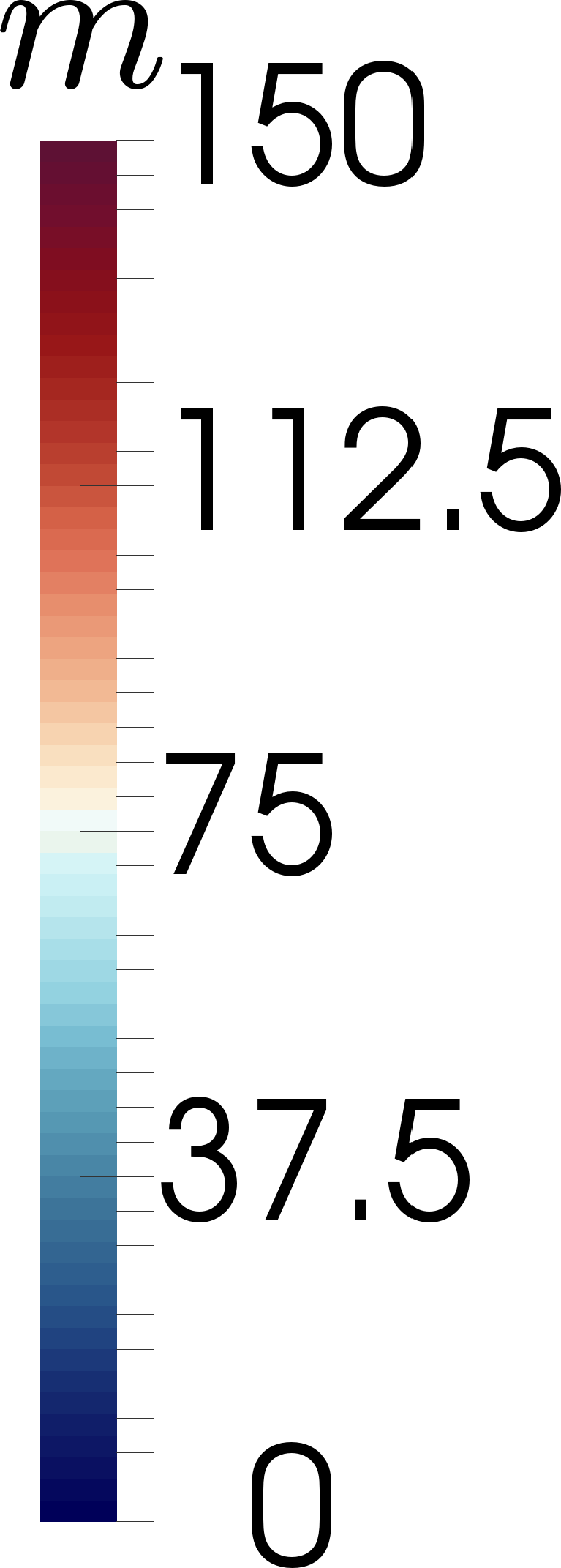}};
  \node at (-2.6,3.3) {\small{\color{white}{True}}};
  \node at (-2.4,0.9) {\small{\color{white}{D1}}};
  \node at (5.5,0.9) {\small{\color{white}{D2}}};
\end{tikzpicture}
\caption{Discrete MAP points, or $\mpost$, at the ``true''
  velocity field and time interval $[-1,16]$ obtained for different sensor locations. The top left
  figure shows the ``true'' initial concentration, $\bvec{m}$, we wish to
  characterize in \eqref{eq:ad-diff}, overlaid with the velocity
  field $\bvec{v}^*$. The top right figure shows the MAP point
  obtained using all 234 possible sensor and thus depicts the best we
  can do for our particular problem formulation. On the bottom, we
  show MAP points obtained with two different designs
  (D1 and D2) consisting of 8 sensors.}
\label{fig:MAPs}
\end{figure}

\begin{figure}%[htb]
\centering
\begin{tikzpicture}
\node (11) at (-6.15,4)
  {\includegraphics[width=0.33\linewidth]{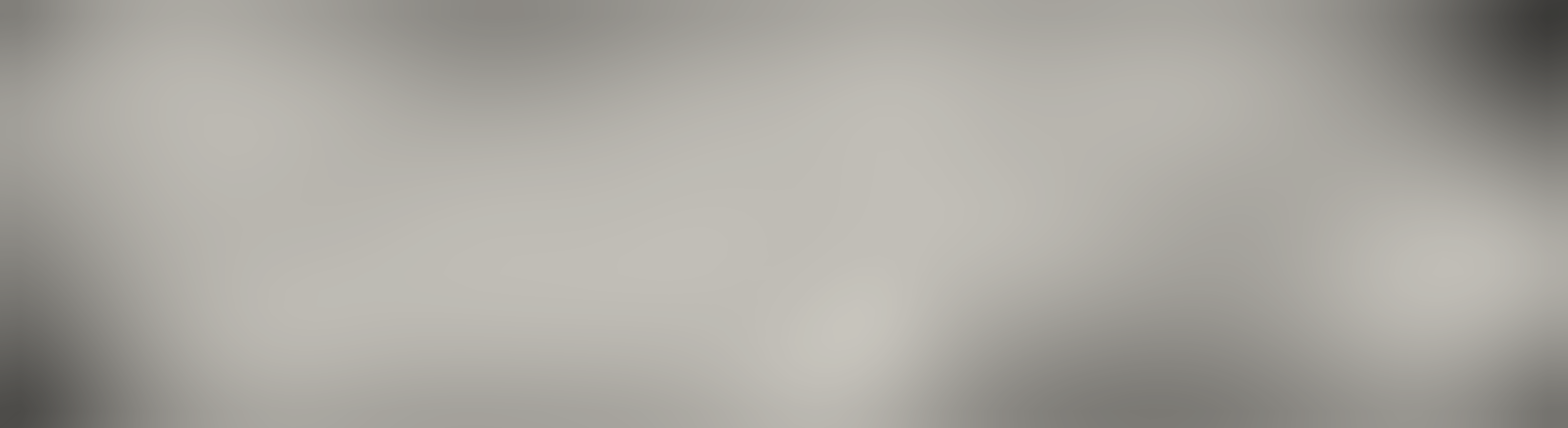}};
\node (12) at (-0.61,4)
  {\includegraphics[width=0.33\linewidth]{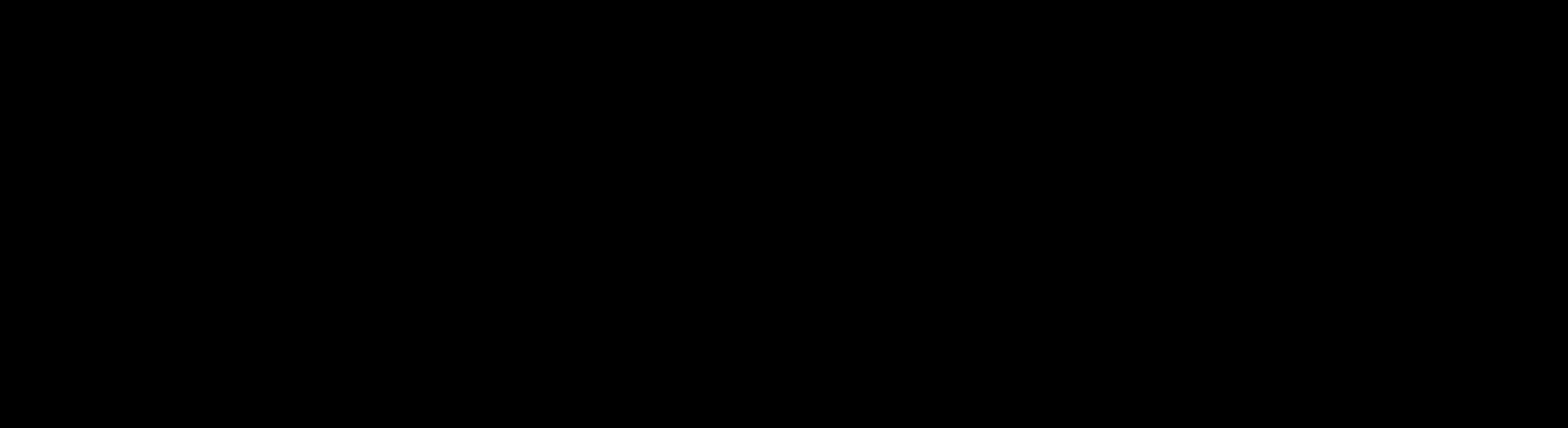}};
\node (21) at (4.91,4)
  {\includegraphics[width=0.33\linewidth]{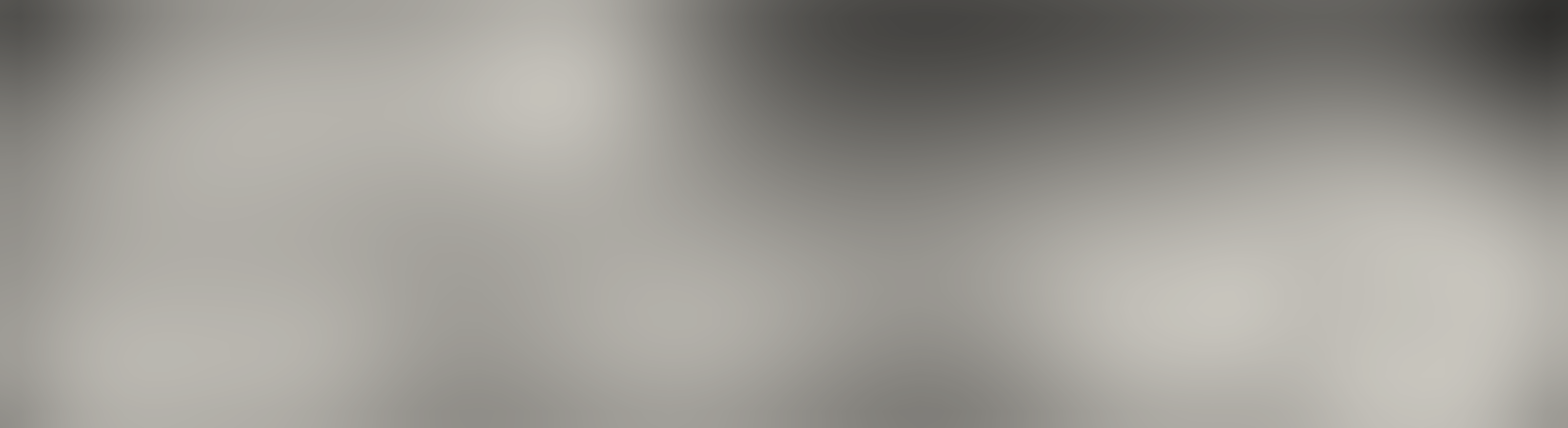}};
\node (22) at (8.1,4.2)
  {\includegraphics[width=0.1\linewidth]{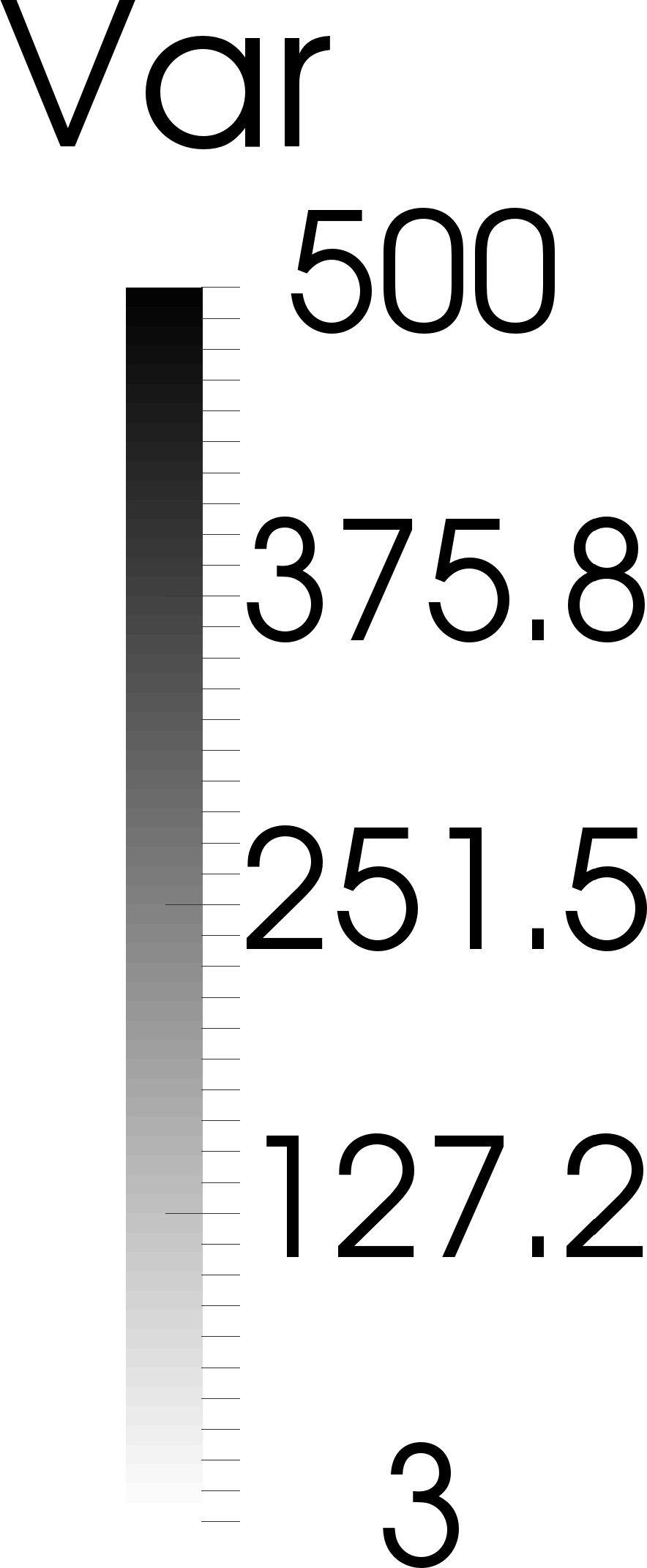}};
  \node at (-3.8,3.5) {\small{\color{black}{D1}}};
  \node at (1.4,3.5) {\small{\color{white}{Prior}}};
  \node at (6.8,3.5) {\small{\color{black}{D2}}};
\end{tikzpicture}
\caption{Shown is the pointwise variance for the prior (middle) and
  posterior using the designs D1 and D2 shown in the bottom row of Figure \ref{fig:MAPs}.}
\label{fig:ptwise_Var}
\end{figure}

\subsection{Discretization and implementation}
To discretize the forward and adjoint operators as well as the prior
operator, we use the hIPPYlib~\cite{villa2019hippylib} framework. We
use implicit Euler for time-stepping using 250 timesteps in the interval $(T_0,T1]$. A streamline upwind Petrov
Galerkin (SUPG) discretization in space is used to stabilize the
discretization of the advective term. The spatial discretization uses 3750 triangles resulting in 1976 degrees of freedom.
To accelerate the linear solves
in the implicit Euler steps, we build sparse LU factorizations of the
spatial forward and adjoint operators and reuse them throughout the
implicit Euler iterations.  The evolution for one choice of velocity
field and initial condition is shown in Figure \ref{fig:fwd_Prop}.
All other solver components detailed in sections~\ref{sec:darcy} and
\ref{sec:numerics} also use hIPPYlib for the PDE discretization and
Python for the numerical linear algebra.

% *****************************************
\section{Subsurface flow example: irreducible uncertainties}\label{sec:darcy}
In this section, we characterize the irreducible uncertainties we must take
into account when computing optimal sensor locations for the inverse problem
presented in section~\ref{subsec:subsurf_fm}. The sources of irreducible
uncertainty are the advection velocity $\bvecS{v}$ and the initial time $T_0$ in
\eqref{eq:ad-diff}. 
As explained further below,
the uncertainty in the velocity
field~\eqref{eq:velocity} stems from the 
log-permeability field $\theta(\bvec{x})$ 
of the aquifer
being uncertain. Therefore, the random variable $\xi$, 
introduced in section~\ref{subsec:bayes_uncertain}, that 
parameterizes model uncertainty is given by $\xi = (\theta, T_0)$.

\subsection{Characterizing the irreducible uncertainty in the
  subsurface flow velocity}
We will use a steady state inverse problem, different from
the inverse problem discussed in section~\ref{subsec:subsurf_fm}, which
is the main target of this work, to characterize a distribution of the
velocity field $\bvecS{v}$ and obtain samples from this distribution.
Darcy's law describes the flow of a fluid through a medium in terms of
the physical properties of the medium and the pressure gradient.
Using Darcy's law, the background velocity field $\bvecS{v}$
in~\eqref{eq:ad-diff} is described by
\begin{equation}\label{eq:velocity}
\bvecS{v}(\bvec{x}) = -e^{\theta(\bvec{x})}\nabla{p(\bvec{x})},
\end{equation}
where $\theta(\bvec{x})$ is the log-permeability field of the aquifer and
$p(\bvec{x})$ denotes the pressure of the groundwater transporting the
contaminant through the medium.

The equation governing the pressure field is obtained using Darcy's law along
with mass conservation and assuming incompressibility (of the fluid carrying
the contaminant).
This results in a linear elliptic PDE that can be written in the 
following dual-mixed form: 
\begin{equation}\label{eq:DarcyMixed}
\begin{aligned}
-\bvecS{v} - e^{\theta(\bvec{x})}\nabla{p} &= 0 \hspace{3mm} &&\mbox{   in } D, \\
\nabla \cdot \bvecS{v} &= 0 \hspace{3mm} &&\mbox{   in }D,\\
p &= p_0 \hspace{3mm} &&\mbox{   on } \Gamma_{\! l} \cup \Gamma_{\! r}, \\
-\bvecS{v} \cdot \bvecS{n} &= 0 \hspace{3mm} &&\mbox{   on }
\partial D \setminus (\Gamma_{\! l} \cup \Gamma_{\! r}). 
\end{aligned}\end{equation}
Here, $\Gamma_{\! l}:=\{0\}\times [0,b]$ and $\Gamma_{\!
  r}:=\{a\}\times [0,b]$ denote the left and right domain boundaries, respectively.
The Dirichlet boundary conditions are prescribed 
as $p_0 \equiv 0$ on $\Gamma_{\! l}$ and $p_0 \equiv 1$ on $\Gamma_{\!
  r}$.  
We mention that using the mixed form~\eqref{eq:DarcyMixed} when deriving the weak
formulation and finite element discretization  ensures mass conservation over
the elements in the numerical solution.  Also, porosity and fluid viscosity are
omitted from \eqref{eq:velocity} and \eqref{eq:DarcyMixed} because they are assumed
constant and thus can be absorbed in the remaining terms in the equations
through scaling.

As mentioned earlier, the uncertainty in the velocity
field~\eqref{eq:velocity} is due to uncertainty in 
the log-permeability field $\theta(\bvec{x})$.
One way to obtain a statistical distribution for the
uncertain permeability field is to solve a Bayesian inverse problem
governed by the forward problem described in \eqref{eq:DarcyMixed}.
This is described next.

\begin{figure}[ht]
\centering
\includegraphics[width=.45\textwidth]{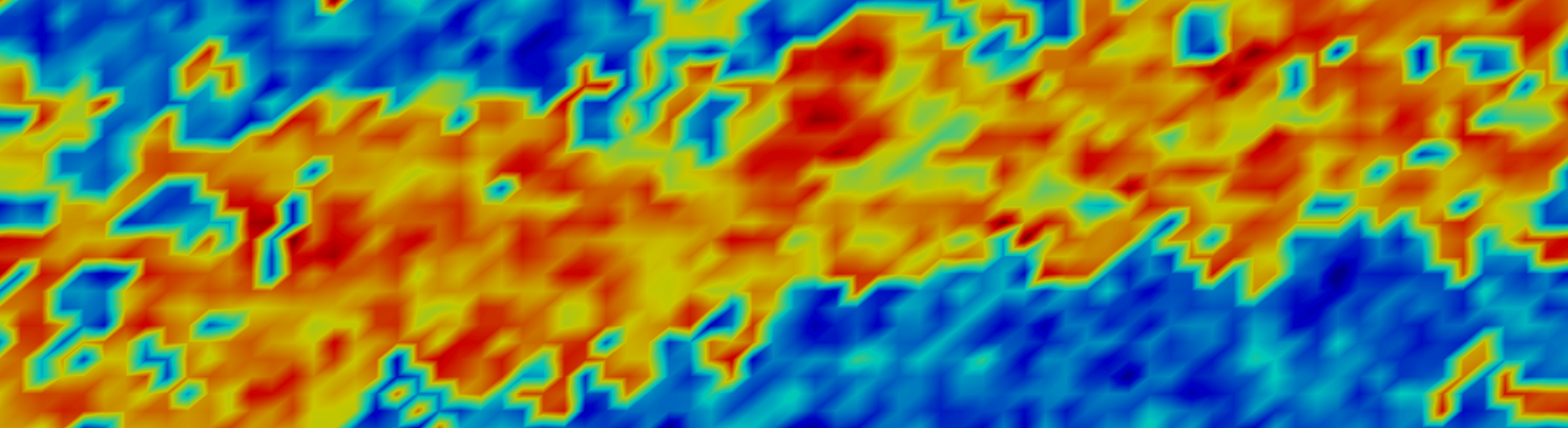}
\includegraphics[width=.45\textwidth]{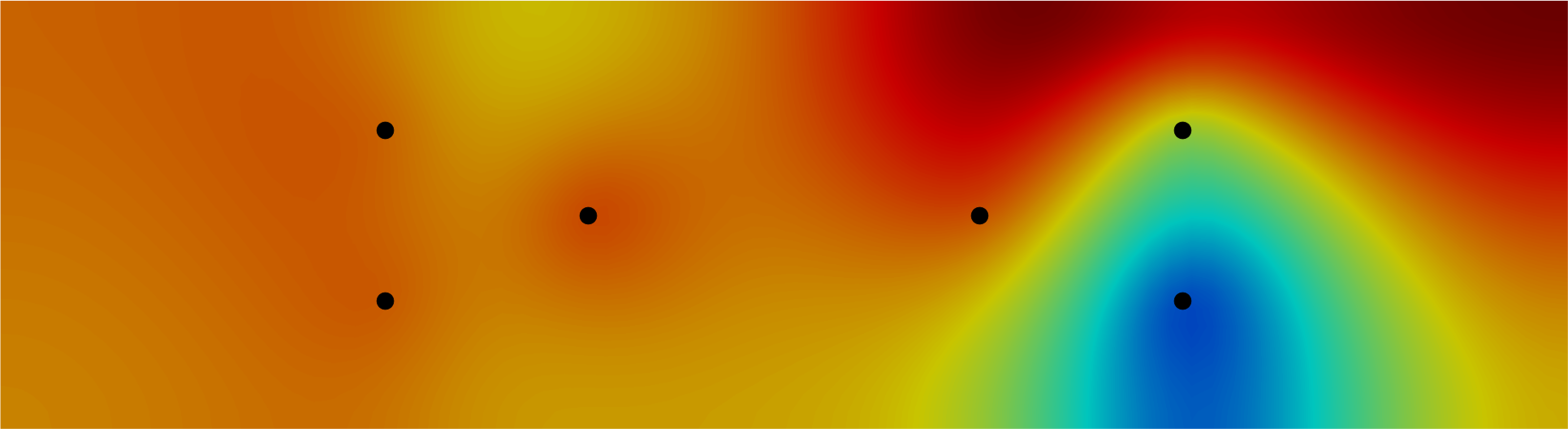}
\includegraphics[width=.08\textwidth]{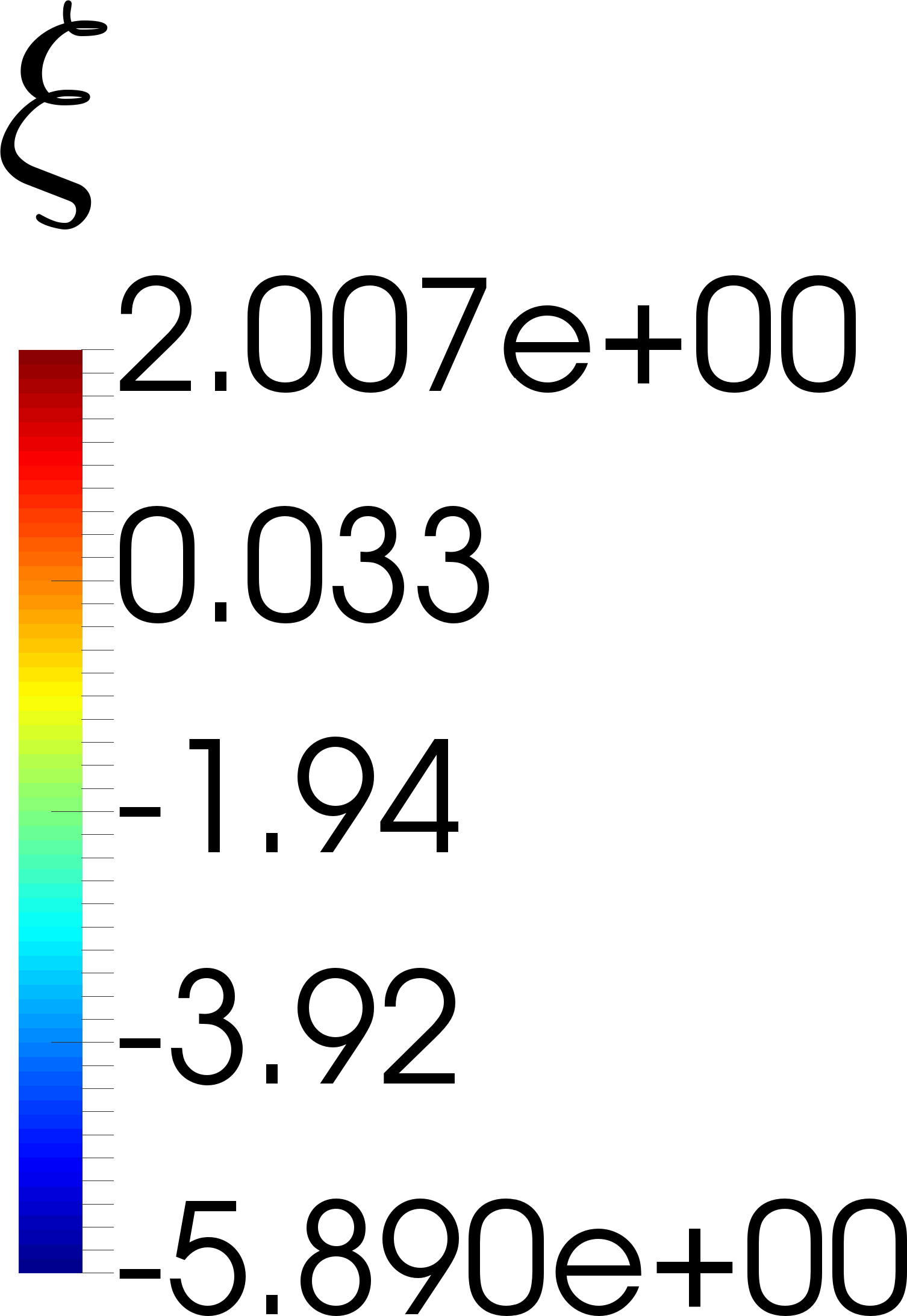}
\caption{Left: the ``true'' log permeability field used to generate synthetic pressure 
data to estimate the log-permeability field. 
This ``true'' log-permeability field is a scaled version of the
71st slice of the log-permeability field data from the SPE10 model
\cite{SPE10}. Specifically, letting $l(x)$ be the SPE log-permeability data, 
we use $\theta_\text{true}(\bvec{x}) = 0.5\left[ l(\bvec{x})-\min(l(\bvec{x})) \right]+\min(l(\bvec{x}))$. 
Right:
the maximum a posteriori estimate of the permeability field obtained after
solving the Bayesian inverse problem governed by \eqref{eq:DarcyMixed} using
six pressure observations. The observation locations are shown as black dots.
}
\label{fig:spe10slice}
\end{figure}

As synthetic data for estimating $\theta$, we use six noisy pressure observations
obtained by solving \eqref{eq:DarcyMixed} using a scaled version of the 71st
slice of the SPE10 permeability model (shown in Figure~\ref{fig:spe10slice}).
We use a bi-Laplacian prior for $\theta$ (see~\cite{bui:infBayes}) and assume 
a Gaussian noise model.
The Bayesian inverse problem of estimating $\theta$ using
pressure measurements is a nonlinear inverse problem that in general requires
a sampling algorithm.  We compute an approximate solution
to that Bayesian inverse problem using a Laplace approximation to the posterior, 
which is a Gaussian approximation of the posterior centered at the 
maximum a posteriori probability (MAP) point; see
e.g.,~\cite{bui:infBayes}. 

To generate samples of the uncertain velocity field, we proceed as follows: we
generate log-permeability field realizations by drawing samples from the
approximate posterior for $\theta$; subsequently, we solve the pressure
equation to obtain the corresponding pressure fields, which are then used to
compute velocity field samples, $\bvecS{v}(\theta_i)$, using Darcy's Law
\eqref{eq:velocity}.  For illustration, four different Darcy velocity field
samples, together with the permeability fields, are shown in Figure~\ref{fig:samples}. 

\begin{figure}%[htb]
\centering
\begin{tikzpicture}
\node (11) at (-5.8,4)
  {\includegraphics[width=0.5\linewidth]{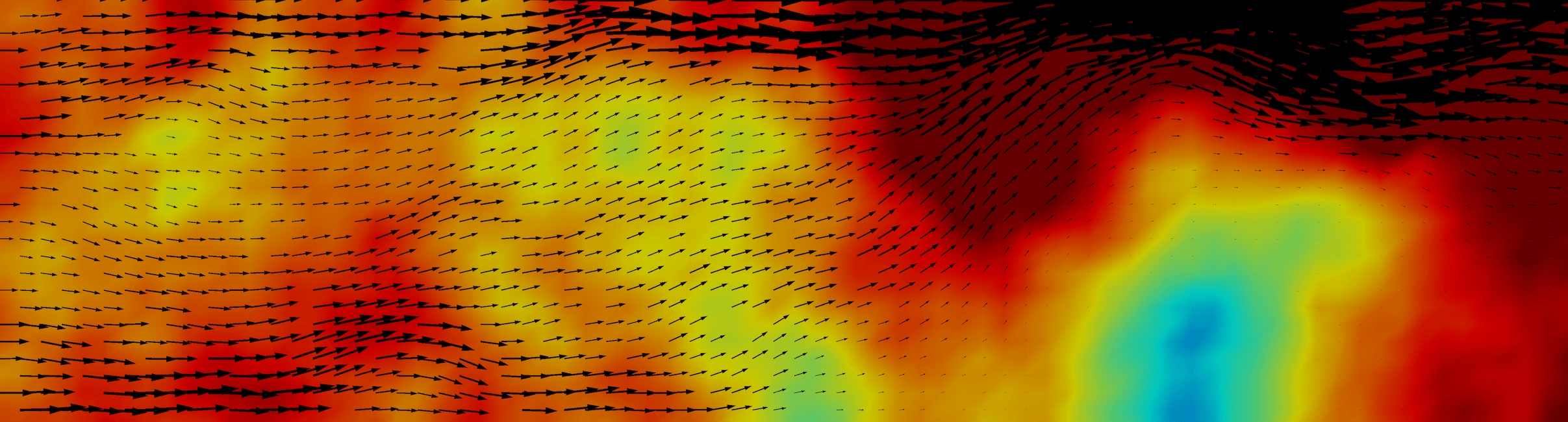}};
\node (12) at (2.5,4.0005)
  {\includegraphics[width=0.5\linewidth]{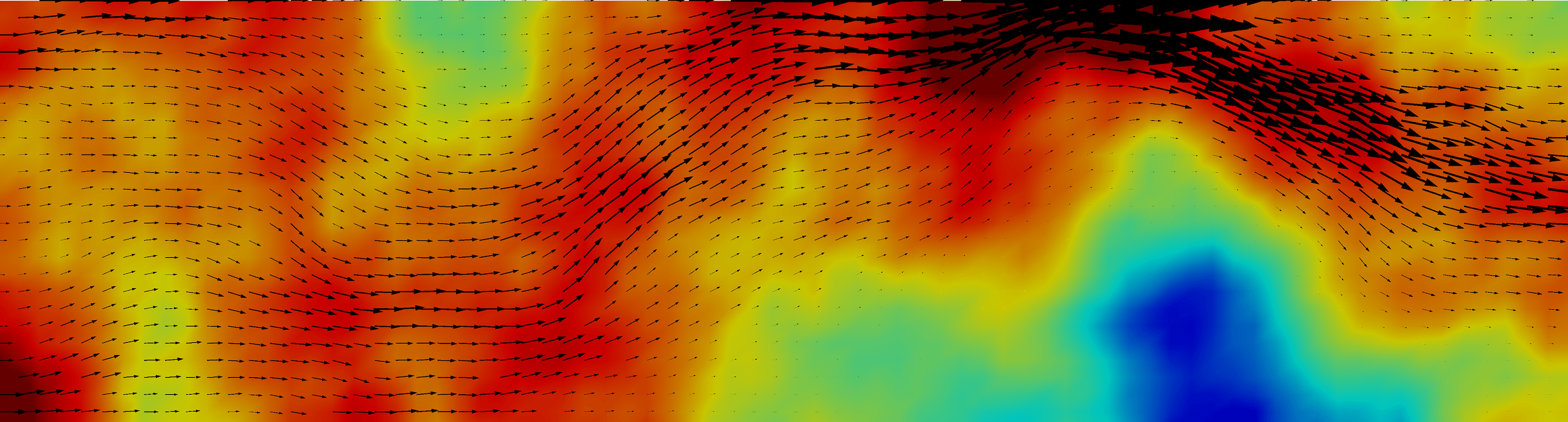}};
\node (21) at (-5.8,1.7)
  {\includegraphics[width=0.5\linewidth]{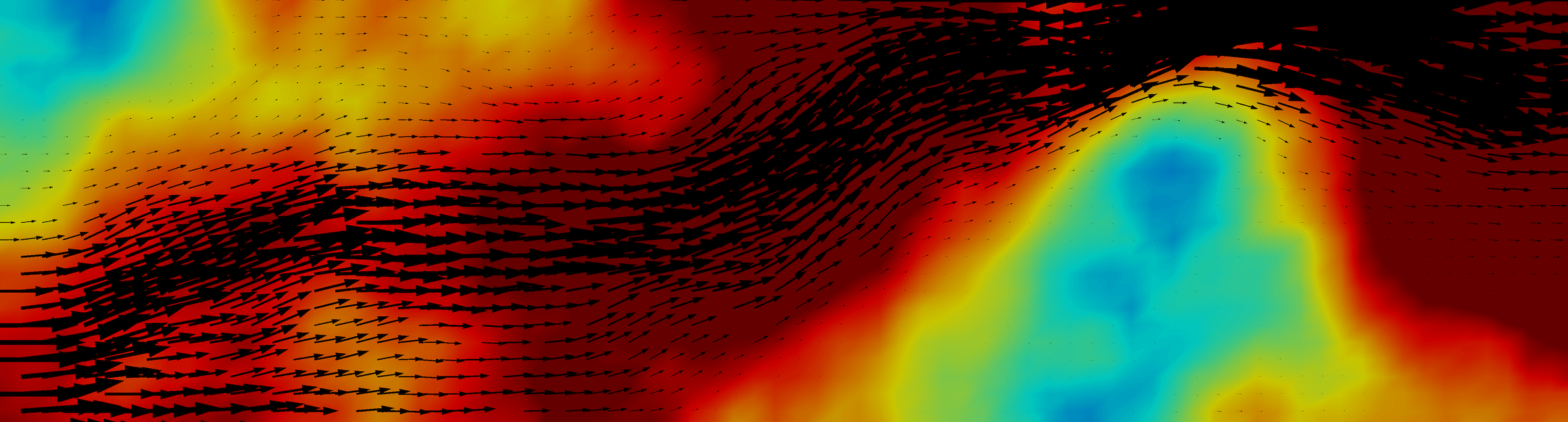}};
\node (22) at (2.5,1.699)
  {\includegraphics[width=0.5\linewidth]{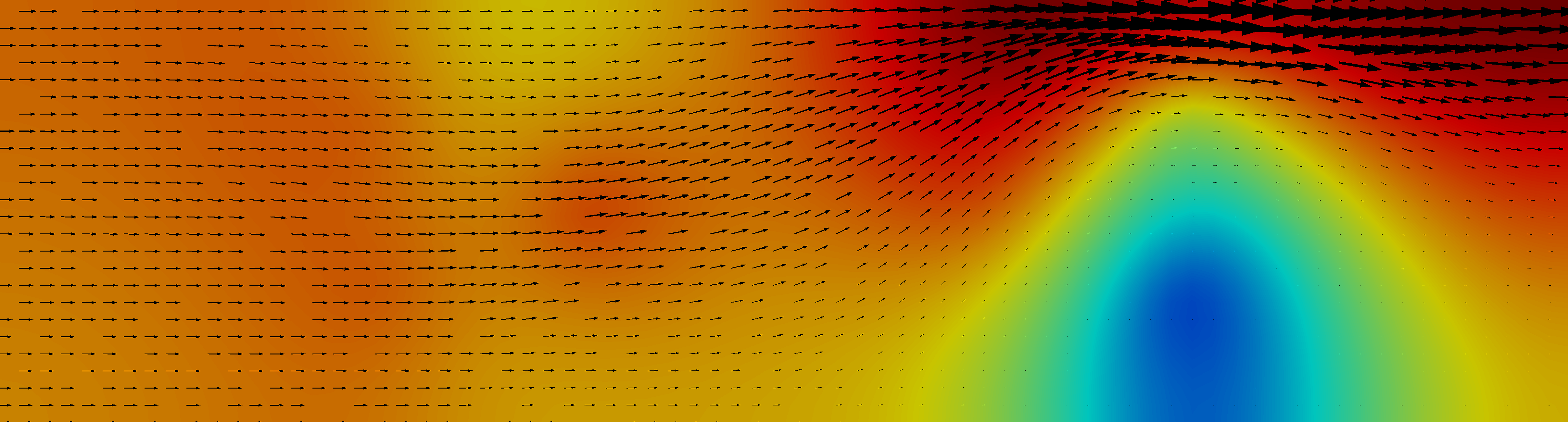}};
\end{tikzpicture}
\caption{Three log-permeability field samples $\theta_i$ and their corresponding velocity fields $\bvec{v}(\theta_i)$ (shown as black arrows) obtained from the posterior distribution $\pi_{\theta}$ and the
  log-permeability field and velocity field corresponding to the MAP point (lower
  right).}
\label{fig:samples}
\end{figure}

\subsection{Uncertainty in the initial time}
The second source of irreducible uncertainty we consider is the initial time
$T_0$. Receiving measurements of the concentration, one usually
does not know exactly \emph{when} the contaminant has been introduced
and thus we might only have an estimate for $T_0$. This uncertainty of
the initial time should be taken into account when computing optimal
designs, i.e., optimal designs should be tailored to a range of
initial times $T_0$. This uncertainty in the initial time is an
additional irreducible uncertainty we take into account, and we target
designs that are optimized for $T_0 \in [-1,1]$ and assume 
$T_0 \sim \mathcal{U}(-1,1)$.

\section{Subsurface flow example: optimal experimental design under uncertainty}\label{sec:numerics}
In this section we present numerical results for optimal sensor
placement under uncertainty and compare results obtained taking into
account the irreducible model uncertainty with results that are
computed for a fixed realization of the irreducible uncertainty.

\subsection{Setup of the OEDUU problem}\label{subsec:setup_numerics}

As explained above, we solve the optimal experimental design under
uncertainty (OEDUU) problem using sample average approximation. In our
numerical tests, we use 100 Monte Carlo samples of the irreducible model
uncertainty $\xi = (\theta,T_0)$, i.e., $N = 100$
in~\eqref{eq:disc_functional}. In table \ref{table:basisGrowth}, we show how many reduced
model basis vectors are needed for different tolerances. The moderate
growth in the number of needed ROM basis vectors indicates that 100
Monte Carlo samples capture a reasonable amount of the overall uncertainty.

%\begin{table}[ht]\centering
%\begin{tikzpicture}[x=.75cm,y=.5cm]
%    \draw (0,7.5) -- (2.5,5);
%    \node at (0.8,5.15) [above right, inner sep=1pt] {\large{$\bvecS \mu$}};
%    \node at (1.9,7.2) [below left, inner sep=1pt] {\large{$\bvec N$}};
%    \draw (0,0) grid [step=2.5] (12.5,7.5);
%    \node at (1.25+2.5,6.25) {\large{50}};
%    \node at (1.25+2*2.5,6.25) {\large{100}};
%    \node at (1.25+3*2.5,6.25) {\large{300}};
%    \node at (1.25+4*2.5,6.25) {\large{500}};
%    \node at (1.25,6.25-2.5) {\large{0.002}};
%    \node at (1.25,6.25-2*2.5) {\large{0.0001}};
%   
%    \node at (1.25+2.5,6.25-2.5) {{350}};
%    \node at (1.25+2*2.5,6.25-2.5) {{410}};
%    \node at (1.25+3*2.5,6.25-2.5) {{469}};
%    \node at (1.25+4*2.5,6.25-2.5) {{494}};
%
%    \node at (1.25+2.5,6.25-2*2.5) {{881}};
%    \node at (1.25+2*2.5,6.25-2*2.5) {{985}};
%    \node at (1.25+3*2.5,6.25-2*2.5) {{1051}};
%    \node at (1.25+4*2.5,6.25-2*2.5) {{1082}};
%\end{tikzpicture}
%\caption{Comparison of the number of basis vectors needed in computing
%  $\M{Q}$ and $\widehat{\M{Q}}$ for different numbers of samples ($N$) using
%  Algorithm \ref{alg:composite_basis} with one cluster and $\mu = 0.002$, $0.0001$.}
%\label{table:basisGrowth}
%\end{table}

\begin{table}[]
\centering
%\resizebox{0.5\textwidth}{!}{
\begin{tabular}{|c|c|c|c|c|}
\hline
  \diagbox{$\mu$}{$N$}     & 50  & 100 & 300  & 500  \\ \hline
0.002  & 350 & 410 & 469  & 494  \\ \hline
0.0001 & 881 & 985 & 1051 & 1082 \\ \hline
\end{tabular}%}
\caption{Comparison of the number of basis vectors needed to compute
  $\M{Q}$ and $\widehat{\M{Q}}$ for different numbers $N$ of velocity
  samples (columns) and
  tolerances $\mu = 0.002$, $0.0001$ (rows). Here, we use
  Algorithm \ref{alg:composite_basis} with one cluster.
}
\label{table:basisGrowth}
\end{table}

As discussed in section~\ref{sec:mod_red}, minimization of the OEDUU
objective~\eqref{eq:disc_functional} is made tractable by elimination
of the PDE solves, throughout the optimization iterations, via
computation of a composite reduced order basis using
algorithm~\ref{alg:composite_basis}. 
In table~\ref{table:clustering}, we
study the accuracy and dimension of the joint basis for different
numbers of clusters and choices of $\mu$, which control the
accuracy of the reduced models.

Our clustering algorithm sorts the uncertain model parameters into bins based
on a distance measure~\eqref{eq:distance} that requires choosing a suitable
initial concentration $\bvec{m}$. Since contaminants in groundwater typically
originate from a few localized sources, 
we choose the initial
concentration $\bvec{m}$, in the definition of the distance measure~\eqref{eq:distance}, 
to be the sum of three radial basis functions
with varying centers, spreads and magnitudes. 
From table~\ref{table:clustering}, we note that for 100 samples of
the irreducible uncertainty $\xi$, clustering is not needed as the
resulting dimension of $\M{Q}$ and $\widehat{\M{Q}}$ in \eqref{eq:FQF}
is rather small. For smaller choices of $\mu$, such as $\mu =
10^{-4}$, or larger numbers of samples, clusters becomes more
important to reduce memory usage and compute time.

We find that the joint basis obtained using 1 cluster and $\mu = 0.002$
is sufficient for our purposes, both for approximation of the OEDUU objective
$\phiN$ in \eqref{eq:disc_functional} and approximation of $\widetilde{\M{F}}_i$ for each
forward operator sample.  Thus, this choice of parameters is used henceforth.

\begin{table}[]
\center
\begin{tabular}{|c|c|c|c|}
\hline
			     & {$\bvecS \mu$}    & \textbf{Number basis vectors} & \textbf{Error}    \\ \hline
\multirow{2}{*}{\textbf{1 Cluster}}  & $ 0.002$  & 410                           & $6.876\times 10^{-4}$              \\ \cline{2-4} 
			     & $ 0.0001$ & 985                          & $2.335\times 10^{-6}$             \\ \hline
\multirow{2}{*}{\textbf{4 Clusters}} & $ 0.002$  & 283,267,271,279               & $8.348\times 10^{-4}$              \\ \cline{2-4} 
			     & $ 0.0001$ & 667,751,710,731               & $7.943\times 10^{-6}$              \\ \hline
\end{tabular}
\caption{Comparison of ROMs obtained using algorithm \ref{alg:composite_basis} with $\mu = 0.002$, $0.0001$ and different numbers of clusters. Using $N=100$ sample velocity fields and initial times, $\phi_N(\bvec{w})$ in objective \eqref{eq:disc_functional} is
  computed for $\bvec{w} = [1,\ldots,1]$, i.e., the design which
  includes all sensor locations.
  The relative error in column four is computed as $ \frac{|{\phi_N}(\bvec{w}) -
    {\phi^t_N}(\bvec{w})|}{|{\phi^t_N}(\bvec{w})|}$, where the true
  value ${\phi^t_N}(\bvec{w})$
  was approximated using a reduced basis
  computed using algorithm~\ref{alg:composite_basis} with $\mu
  = 10^{-6}$. The third column shows how many basis vectors $k$ were
  needed in the algorithm to obtain the desired accuracy as described by line 6 in algorithm \ref{alg:composite_basis}.}
\label{table:clustering}
\end{table}

To obtain sparse and binary optimal designs, we solve a sequence of
minimization problems of the form~\eqref{eq:disc_functional}.  Following the
sparsification approach outlined in section~\ref{subsec:oedProb}, we use penalty
functions of the form \eqref{eq:weightedPenalty} with $\alpha = 0.1 $ and
$\varepsilon(i) = \left( \frac{2}{3}\right)^i$, $i = 1, 2, 3, \ldots$. For our problem setup, the weights converge to binary values in approximately 20 iterations of the sparsification procedure.
In our computations, we use
the Broyden-Fletcher-Goldfarb-Shannon (BFGS) method available in python's
\verb+scipy+ library.  We supply this minimization algorithm with the objective
function and its gradient as described in section~\ref{sec:discrete_oed}. 

\subsection{Solving the OEDUU problem}\label{subsec:comp}
Here, we demonstrate that solving the OEDUU problem with our proposed
approach is effective in producing designs for which the expected
value of the posterior uncertainty \eqref{eq:expected_val} is
small. In particular, we numerically verify that the SAA
\eqref{eq:finite_expectation} is a reasonable approximation.  This is
done by solving the OEDUU problem with $N=100$ SAA samples, and
comparing the expected value of the objective obtained using the optimal experimental designs under uncertainty, which we will refer to as \emph{uncertainty-aware designs}, 
with that obtained with \emph{deterministic designs}.  By
deterministic designs we mean optimal designs obtained using a 
single sample
from the irreducible uncertainty (velocity field and initial time);
this amounts to minimizing~\eqref{eq:disc_functional} with $N = 1$.

The results are shown in Figure~\ref{fig:super-plot1v2}. To obtain
designs with different numbers of sensors, \eqref{eq:disc_functional} was solved for different regularization parameters $\gamma$, which indirectly controls the sparsity of the optimal designs. To
approximate the expectation shown on the $y$-axis, we use a Monte
Carlo approximation of the expectation of the trace update $\mathbb{E}
\left[\tr\left( \M{K} (\xi,\bvec{w}) \right)\right]$ with 100 samples
from the irreducible uncertainty that are drawn independently from the
SAA samples used to compute the uncertainty-aware designs. Additionally, to avoid bias due
to the model reduction, we use more accurate reduced models,
specifically we use Algorithm \ref{alg:composite_basis} with
$\mu = 10^{-4}$.

If the sampling as well as the model reduction errors are small
enough, we would expect that the uncertainty-aware designs reduce the expected trace more
than deterministic designs. As can be seen in
Figure~\ref{fig:super-plot1v2}, this is the case most of the time,
i.e., we find good binary minimizers of the objective \eqref{eq:expected_val}.
Only very few deterministic designs
%with three and thirteen sensors
are superior to the uncertainty-aware designs, which is likely due to sampling
error.

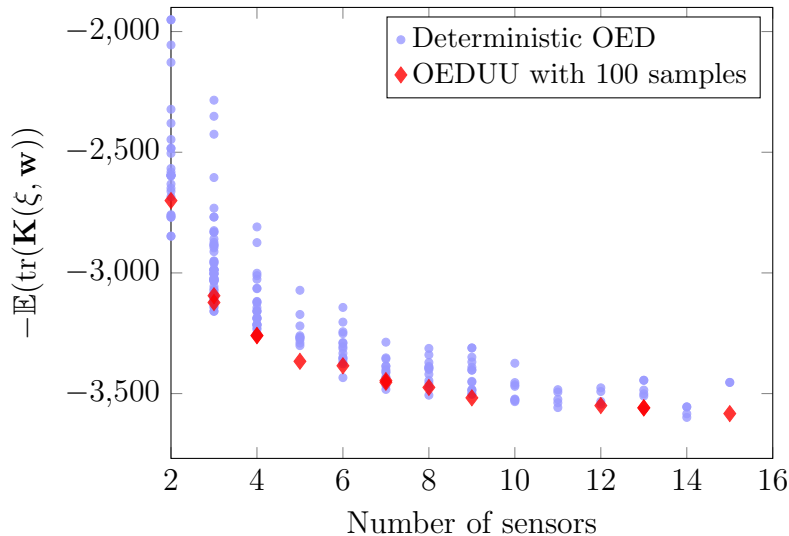
\begin{figure}[ht]\centering
\begin{tikzpicture}[]
\begin{axis}[compat=newest, width=8cm, height=6cm, scale only axis,
xlabel=Number of sensors,
ylabel=${-\mathbb{E}(\tr(\M{K}(\xi,\bvec{w}))}$,
xmin=2, xmax=16, ymax=-1900,
legend style={font=\small,nodes=right}]
\addplot [color=blue!40!white, mark=*, mark size=1.5pt, only marks,opacity=0.8]
table[x=nnz,y=trace] {data/100Samps_u_1_m1p1_FINAL.out};
\addlegendentry{Deterministic OED}
\addplot [color=red, mark=diamond*, mark size=3pt, only marks,opacity=0.8]
table[x=nnz,y=trace] {data/100Samps_u_100_m1p1_FINAL.out};
\addlegendentry{OEDUU with 100 samples}
\end{axis}
\end{tikzpicture}
\caption{Trace reduction ${\phi_N}$ in objective \eqref{eq:phiN} for
  deterministic OED (light blue dots) and OEDUU (red diamonds).  The
  irreducible uncertainty for the uncertainty-aware designs is approximated using 100
  samples from the irreducible uncertainty (velocity field and initial
  times). The deterministic designs are computed for 20 samples from
  the irreducible uncertainty.  The $x$-axis shows the number of
  sensors for each design.  The expectation in the objective, which is
  shown on the $y$-axis, is approximated using 100 samples that are
  chosen independently from the samples used to compute the
  designs.}
\label{fig:super-plot1v2}
\end{figure}

\subsection{Quality of the computed optimal designs}
Ultimately, our goal is to choose the sensors for data collection
which will allow us to learn the most about the initial concentration
for a fixed velocity field and initial time $T_0$.  Here we
demonstrate numerically that the designs computed via OEDUU perform
better (produce more informative data) on average than deterministic
designs computed for realizations of the irreducible model
uncertainty.

Here, we study the effectiveness of the same uncertainty-aware designs used above not in
expectation, but for 100 individual realizations $\xi_i$ of the
irreducible uncertainty, which again differ from the SAA samples used to
compute the uncertainty-aware designs.  Results are shown in Figure~\ref{fig:super-plot2}, where
now we show percentiles for the trace updates for individual $\xi_i$.

We observe that the designs obtained using OEDUU tend to have a
smaller mean in the update trace than the designs obtained using
individual $\xi_i$, particularly when we only use few sensors.
Additionally, the 25th--75th and 2nd--98th percentiles show that
poorly performing designs are less likely when one accounts for the
uncertainty in the design computation. This is again, particularly the
case for designs with small numbers of sensors. We believe that 
the
reason for diminishing benefit of computing designs under uncertainty for larger
number of sensors is that at each sensor location, 5 
measurements in time are used. Thus, most information about the initial
condition that can be recovered is already available
from a rather small number of sensors and thus different designs play a
less important role. Here, the diffusion contained
in the governing PDE plays an important role as well, as it
limits the resolution of the initial condition reconstruction that can
be obtained from observations.
This is also reflected by the decreasing gain
of using more than 5 sensors.

The benefit of computing uncertainty-aware designs is greater for inverse
problems where less information can be gained at each sensor location. This can
be seen in Figure~\ref{fig:super-plot2_lessT}. To obtain these results, we use
the same inverse problem and ROM setup as described in
Sections~\ref{sec:IP}-\ref{subsec:setup_numerics} but restrict the times at
which we can measure data to $\tau_i = 12,15$, effectively reducing the amount
of information gathered at sensors and increasing the importance of careful
sensor placement. Comparing Figures~\ref{fig:super-plot2} and
\ref{fig:super-plot2_lessT},  we can see that for ``harder'' inverse problems,
i.e., those with less-informative data, the difference between
uncertainty-aware designs and deterministic designs is more significant.

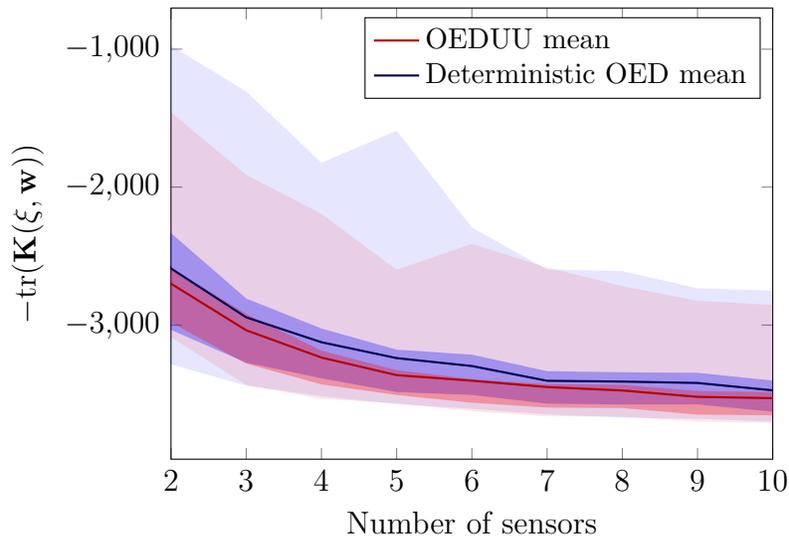
\begin{figure}[h]\centering
\begin{tikzpicture}[]
\begin{axis}[compat=newest, width=8cm, height=6cm, scale only axis,
xlabel=Number of sensors, xmin=2, xmax=10,
ylabel=${-\mbox{tr}(\mathbf{K}(\xi,\bvec{w}))}$,
legend style={font=\small,nodes=right}]
\addplot [color=red!70!black, mark=none, mark size=10.5pt,thick]
table[x=nnz,y=trace] {data/1Samp_u_100Samps_Mean.out};
\addlegendentry{OEDUU mean}

\addplot [color=blue!30!black, mark=none, mark size=10.5pt,thick]
table[x=nnz,y=trace] {data/1Samp_u_1Samp_Mean.out};
\addlegendentry{Deterministic OED mean}

\addplot [name path= low100_2,color=red!70!black, mark=none, mark size=0.8pt,thick,opacity=0.01]
table[x=nnz,y=trace] {data/1Samp_u_100Samps_2th.out};

\addplot [name path=low1_2, color=blue, mark=none, mark size=0.8pt,thick,opacity=0.01]
table[x=nnz,y=trace] {data/1Samp_u_1Samp_2th.out};

\addplot [name path=high100_98,color=red, mark=none, mark size=0.8pt,thick,opacity=0.01]
table[x=nnz,y=trace] {data/1Samp_u_100Samps_98th.out};

\addplot [name path=high1_98,color=blue, mark=none, mark size=0.8pt,thick,opacity=0.01]
table[x=nnz,y=trace] {data/1Samp_u_1Samp_98th.out};

\addplot [name path= low100_25,color=red!70!black, mark=none, mark size=0.8pt,thick,opacity=0.01]
table[x=nnz,y=trace] {data/1Samp_u_100Samps_25th.out};

\addplot [name path=low1_25, color=blue, mark=none, mark size=0.8pt,thick,opacity=0.01]
table[x=nnz,y=trace] {data/1Samp_u_1Samp_25th.out};

\addplot [name path=high100_75,color=red, mark=none, mark size=0.8pt,thick,opacity=0.01]
table[x=nnz,y=trace] {data/1Samp_u_100Samps_75th.out};

\addplot [name path=high1_75,color=blue, mark=none, mark size=0.8pt,thick,opacity=0.01]
table[x=nnz,y=trace] {data/1Samp_u_1Samp_75th.out};

\addplot [
thick,
color=blue!50!black,
fill=blue, 
fill opacity=0.1
]fill between[ 
of = low1_2 and high1_98, 
];
\addplot [
thick,
color=red!50!black,
fill=red, 
fill opacity=0.1
]fill between[ 
of = low100_2 and high100_98, 
];

\addplot [
thick,
color=blue!50!black,
fill=blue, 
fill opacity=0.3
]fill between[ 
of = low1_25 and high1_75, 
];
\addplot [
thick,
color=red!50!black,
fill=red, 
fill opacity=0.3
]fill between[ 
of = low100_25 and high100_75, 
];
\end{axis}
\end{tikzpicture}
\caption{Comparing the mean for the trace in the deterministic
  objective, $\phi_N$ with $N=1$ in \eqref{eq:phiN}, using
  deterministic designs and designs taking into account the
  uncertainty. The uncertainty-aware designs and the deterministic designs are the same as
  used in Figure~\ref{fig:super-plot1v2}. Each design is used to
  evaluate the deterministic objective for 100 realizations of the
  irreducible uncertainty. The sample mean is plotted as a solid line
  and the shaded regions depict the 25th--75th and 2nd--98th
  percentile envelopes (for both the mean and the envelopes, red is
  used for uncertainty-aware designs and blue for deterministic designs).}
\label{fig:super-plot2}
\end{figure}

\begin{figure}[h]\centering
\begin{tikzpicture}[]
\begin{axis}[compat=newest, width=8cm, height=6cm, scale only axis,
xlabel=Number of sensors, xmin=2, xmax=10, ymax=-800, ymin=-3600,
ylabel=${-\mbox{tr}(\mathbf{K}(\xi,\bvec{w}))}$,
legend style={font=\small,nodes=right}]
\addplot [color=red!70!black, mark=none, mark size=10.5pt,thick]
table[x=nnz,y=trace] {data/2Times_100_9999_Mean.out};
\addlegendentry{OEDUU mean}

\addplot [color=blue!30!black, mark=none, mark size=10.5pt,thick]
table[x=nnz,y=trace] {data/2Times_1_9999_Mean.out};
\addlegendentry{Deterministic OED mean}

\addplot [name path= low100_2,color=red!70!black, mark=none, mark size=0.8pt,thick,opacity=0.01]
table[x=nnz,y=trace] {data/2Times_100_9999_2th.out};

\addplot [name path=low1_2, color=blue, mark=none, mark size=0.8pt,thick,opacity=0.01]
table[x=nnz,y=trace] {data/2Times_1_9999_2th.out};

\addplot [name path=high100_98,color=red, mark=none, mark size=0.8pt,thick,opacity=0.01]
table[x=nnz,y=trace] {data/2Times_100_9999_98th.out};

\addplot [name path=high1_98,color=blue, mark=none, mark size=0.8pt,thick,opacity=0.01]
table[x=nnz,y=trace] {data/2Times_1_9999_98th.out};

\addplot [name path= low100_25,color=red!70!black, mark=none, mark size=0.8pt,thick,opacity=0.01]
table[x=nnz,y=trace] {data/2Times_100_9999_25th.out};

\addplot [name path=low1_25, color=blue, mark=none, mark size=0.8pt,thick,opacity=0.01]
table[x=nnz,y=trace] {data/2Times_1_9999_25th.out};

\addplot [name path=high100_75,color=red, mark=none, mark size=0.8pt,thick,opacity=0.01]
table[x=nnz,y=trace] {data/2Times_100_9999_75th.out};

\addplot [name path=high1_75,color=blue, mark=none, mark size=0.8pt,thick,opacity=0.01]
table[x=nnz,y=trace] {data/2Times_1_9999_75th.out};

\addplot [
thick,
color=blue!50!black,
fill=blue, 
fill opacity=0.1
]fill between[ 
of = low1_2 and high1_98, 
];
\addplot [
thick,
color=red!50!black,
fill=red, 
fill opacity=0.1
]fill between[ 
of = low100_2 and high100_98, 
];

\addplot [
thick,
color=blue!50!black,
fill=blue, 
fill opacity=0.3
]fill between[ 
of = low1_25 and high1_75, 
];
\addplot [
thick,
color=red!50!black,
fill=red, 
fill opacity=0.3
]fill between[ 
of = low100_25 and high100_75, 
];
\end{axis}
\end{tikzpicture}
\caption{Same as Figure~\ref{fig:super-plot2}, but with only two time
  observations at $\tau_i = 12,15$ (rather than five). In this regime
  with less observations, the uncertainty-aware design outperforms
  deterministic designs more significantly also for larger numbers of
  sensors.}
\label{fig:super-plot2_lessT}
\end{figure}
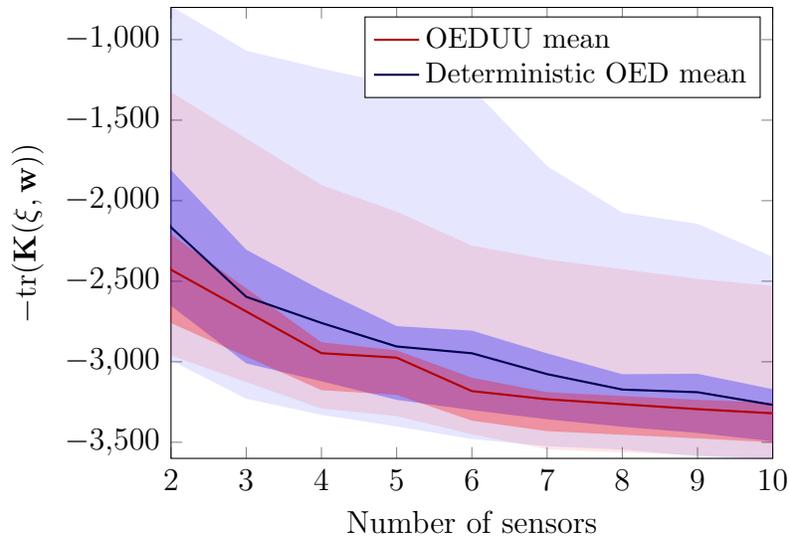

\section{Discussion and conclusions}\label{sec:conclusions}

We have developed a mathematical formulation and numerical scheme for
computing A-optimal experimental designs for infinite-dimensional
Bayesian linear inverse problems governed by PDEs with irreducible model
uncertainty.
The proposed measurement space approach replaces 
trace estimation in an infinite-dimensional (high-dimensional
upon discretization space) with trace estimation in the (finite-dimensional)
measurement space.
The computation of a \emph{joint} reduced basis capturing the action
of the forward operators for different random samples allows efficient computation of optimal designs.  Numerical experiments for the
inversion of initial concentration of a contaminant in groundwater
indicate that, on average, designs that take the model uncertainty into 
account are superior to those that do not. This superiority
is less pronounced as more sensors are being used.  

We have used a Monte Carlo approach for dealing with
uncertainty, which can require many samples for adequate resolution.
A possible extension of our work is to consider alternate approaches
for approximating the uncertainty, e.g., Taylor expansions of the
uncertainty or stochastic approximation (SA). In the latter, the samples for the
irreducible uncertainty are not chosen a priori, but are varied during
the optimization. This avoids potential bias of the design towards the
chosen samples but leads to several additional challenges in the
optimization. Another important research question is the generalization
of our approach to nonlinear inverse problems. One possibility is to follow the
framework in \cite{AlexanderianPetraStadlerEtAl16}, which is based on
Gaussian approximations at the MAP point. Clearly, nonlinear problems
under uncertainty become computationally rather expensive.

One aspect of OED under model uncertainty that is not explored in the
present work is that of dealing with \emph{reducible} sources of model
uncertainty, i.e., additional uncertainties that could be reduced
through observational data.
However, the focus might be on estimation of primary parameters of
interest and not on estimation of this (secondary) reducible
uncertainty. Hence, the design should be chosen to focus mainly on the
primary parameter and only on the secondary parameters to the extent
that it aids inference of the primary parameters.

\ack KK was supported in part by the Research Training Group in
Modeling and Simulation funded by the National Science Foundation
(NSF) via grant RTG/DMS \#1646339. GS acknowledges partial support from
the NSF grant \#1723211. The authors would like to thank 
Noemi Petra and Benjamin Peherstorfer for helpful discussions. \\

\bibliographystyle{plain}
\bibliography{my_biblio}

\end{document}